\newtheorem{thm}{Theorem}[section]
\newtheorem*{thma}{Theorem~A}
\newtheorem*{thmb}{Theorem~B}
\newtheorem{cor}[thm]{Corollary}
\newtheorem{claim}{Claim}[thm]
\newtheorem{prop}[thm]{Proposition}
\theoremstyle{definition}
\newtheorem{defn}[thm]{Definition}
\newtheorem{question}[thm]{Question}
\theoremstyle{remark}
\newtheorem{remark}[thm]{Remark}
\DeclareMathOperator{\ch}{\sf CH}
\DeclareMathOperator{\gch}{\sf GCH}
\DeclareMathOperator{\pr}{Pr}
\DeclareMathOperator{\im}{Im}
\DeclareMathOperator{\cf}{cf}
\DeclareMathOperator{\Tr}{Tr}
\DeclareMathOperator{\otp}{otp}
\DeclareMathOperator{\acc}{acc}
\newcommand{\stick}{{\ensuremath \mspace{2mu}\mid\mspace{-12mu} {\raise0.6em\hbox{$\bullet$}}}}
\newcommand{\s}{\subseteq}
\newcommand{\br}{\blacktriangleright}
\newcommand{\symdiff}{\mathbin\triangle}
\newcommand{\prz}[5]{\pr_0(#1,\allowbreak\faktor{{\scriptstyle{#2\circledast#1}}}{ {}^{#3\circledast#1}},\allowbreak#4,#5)}
\newcommand{\pro}[5]{\pr_1(#1,\allowbreak\faktor{{\scriptstyle{#2\circledast#1}}}{ {}^{#3\circledast#1}},\allowbreak#4,#5)}
\newcommand{\pri}[5]{\pr_i(#1,\allowbreak\faktor{{\scriptstyle{#2\circledast#1}}}{ {}^{#3\circledast#1}},\allowbreak#4,#5)}
\renewcommand\mid{\mathrel{|}\allowbreak}
\title[Ramsey theory over partitions II]{Ramsey theory over partitions II:\\Negative Ramsey relations and pump-up theorems}
\author[M. Kojman]{Menachem Kojman}
\address{Department of Mathematics, Ben-Gurion University of the Negev, P.O.B. 653, Be’er Sheva, 84105 Israel}
\urladdr{https://www.math.bgu.ac.il/~kojman/}
\thanks{Kojman was partially supported by the Israel Science Foundation (grant agreement 665/20).}
\author[A. Rinot]{Assaf Rinot}
\address{Department of Mathematics, Bar-Ilan University, Ramat-Gan 5290002, Israel.}
\urladdr{http://www.assafrinot.com}
\thanks{Rinot was partially supported by the Israel Science Foundation (grant agreement 2066/18) and by the European Research Council (grant agreement ERC-2018-StG 802756).}
\author[J. Stepr\={a}ns]{Juris Stepr\={a}ns}
\address{Department of Mathematics \& Statistics, York University, 4700 Keele Street, Toronto, Ontario, Canada M3J 1P3}
\urladdr{http://www.math.yorku.ca/~steprans/}
\thanks{Stepr\={a}ns was partially supported by NSERC of Canada.}
\date{Preprint as of April 29, 2022. For the latest version, visit \textsf{http://assafrinot.com/paper/50}.}
\begin{document}
\begin{abstract} 
In this series of papers we advance Ramsey theory of colorings over partitions.
In this part, we concentrate on anti-Ramsey relations, or, as they
are better known, strong colorings,
and in particular solve two problems from   \cite{strongcoloringpaper}.

It is shown that for every infinite cardinal $\lambda$, a strong coloring on $\lambda^+$ by
$\lambda$ colors over a partition can be stretched to one with $\lambda^{+}$ colors over
the same partition. Also, a sufficient condition is given for when a strong
coloring witnessing $\pr_1(\ldots)$ over a partition may be improved to witness
$\pr_0(\ldots)$.

Since the classical theory corresponds to the special case of a partition with just one cell,
the two results generalize pump-up theorems due to Eisworth and Shelah, respectively.
\end{abstract}
\maketitle

\section{Introduction}
\subsection{Strong colorings}\label{sec11}
Shortly after Ramsey \cite{ramsey} proved his
groundbreaking result that every infinite graph contains an infinite clique or
an infinite anti-clique, Sierpi{\'n}ski \cite{MR1556708} defined a graph over
the reals with neither an uncountable clique nor an uncountable anti-clique.
As a graph may be identified with a $2$-coloring,
Sierpi{\'n}ski's counterexample suggested that there was a class
of \emph{strong colorings} waiting to be discovered on the uncountable
cardinals.
A function $c:[\kappa]^{2}\to\theta$ of unordered pairs from a cardinal $\kappa$ by $\theta$
colors is a \emph{strong coloring} iff for every $A\s \kappa$ of full cardinality $\kappa$
the coloring $c$ hits all colors from $\theta$ on the pairs from $A$, that is
$c[[A]^{2}]=\theta$. 
Such colorings, whose existence is asserted by the symbol
$\kappa\nrightarrow[\kappa]^2_\theta$, witness powerful failures of analogs of Ramsey's theorem.

Surveys of the rich theory of strong colorings that was developed
since Sierpi{\'n}ski's time to the present  may be found in the
introductions to \cite{paper18,strongcoloringpaper}. We mention here, therefore,
only the milestones  which are  most relevant to the present
work: the various ways in which  strong colorings can become stronger.

Sierpi{\'n}ski's example in particular verified that $\aleph_{1}\nrightarrow[\aleph_{1}]^{2}_{2}$
holds. Improving it to handle a larger number of colors was very challenging.
After a few decades and considerable effort by many, Todor{\v{c}}evi{\'c}
extended in \cite{TodActa} Sierpi{\'n}ski's result to one with the maximal
number of colors, which witnessed $\aleph_1\nrightarrow[\aleph_1]^{2}_{\aleph_1}$. Furthermore,
$\kappa\nrightarrow[\kappa]^{2}_{\kappa}$ holds for every uncountable cardinal $\kappa$ that is the successor
of a regular cardinal. Whether Todor{\v{c}}evi{\'c}'s theorem extends to
successors of singulars is still open.

A second  way of  making a strong coloring stronger was to require that
it attains all possible colors on additional graphs beyond
\emph{squares}, i.e., sets of the form $[A]^2=\{(\alpha,\beta)\in A\times A\mid \alpha<\beta\}$.  
Work by Shelah \cite{Sh:280,Sh:327,Sh:572} and by Moore \cite{Moore} has established that
for every $\kappa$ which is a successor of a regular cardinal carries a coloring $c:[\kappa]^2\rightarrow\kappa$ with the
property that $c[A\circledast B]=\kappa$ for all  $A,B\s
\kappa$ of full cardinality, where $A\circledast B$ stands for the \emph{rectangle}
$\{(\alpha,\beta)\in A\times B\mid \alpha<\beta\}$.  We denote this by
$\kappa\nrightarrow[\kappa\circledast\kappa]^2_{\kappa}$.

Assuming the Generalized Continuum Hypothesis ($\gch$), Erd\H{o}s, Hajnal and Rado
\cite[Theorem~17A]{MR202613} constructed for every infinite cardinal $\lambda$ a
coloring $c:[\lambda^+]^2\rightarrow\lambda^+$ with the property that $c[A\circledast B]=\lambda^+$ for all $A\subseteq \lambda^+$
of (small) size $\lambda$ and $B\s \lambda^+$ of size $\lambda^+$, and then Erd\H{o}s, Hajnal and Milner
\cite[Lemma~14.1]{EHM} used $\gch$ to construct a coloring $c:[\lambda^+]^2\rightarrow\lambda^+$ with
the property that for all $A\s\lambda^+$ of size $\lambda$ and $B\s \lambda^+$ of size $\lambda^+$,
there is $\alpha\in A$ such that $c[\{\alpha\}\circledast B]=\lambda^+$. We denote the former by
$\lambda^+\nrightarrow[\lambda\circledast\lambda^+]^2_{\lambda^+}$, and the latter by
$\lambda^+\nrightarrow[\faktor{{\scriptstyle{{\lambda}\circledast\lambda^+}}}{ {}^{1\circledast\lambda^+}}]^2_{\lambda^+}$. Here, the class
of graphs is enlarged and yet all colors are attained on a subgraph of
a prescribed form.

A third aspect in which some strong coloring were shown to be stronger than
others is a coloring's ability to handle patterns of higher dimension. In
\cite{galvin}, Galvin constructed from the Continuum Hypothesis ($\ch$) a
coloring $c:[\aleph_1]^2\rightarrow2$ with the property that for every finite dimension $k$,
every uncountable pairwise disjoint subfamily $\mathcal A\s[\aleph_1]^k$, and every
color $\gamma<2$, there are $a,b\in \mathcal A$ with $\max(a)<\min(b)$ such that
$c[a\times b]=\{\gamma\}$. In Shelah's notation \cite{Sh:282}, this is denoted by
$\pr_1(\aleph_1,\aleph_1,2,{\aleph_0})$. An extension of Galvin's theorem due to
Todor{\v{c}}evi{\'c} is studied in Part~III of this series \cite{paper55}.

Shelah's principle $\pr_1(\kappa,\kappa,\theta,\chi)$ asserts the existence of a coloring $c:[\kappa]^2\rightarrow\theta$ 
with the property that for every $\sigma<\chi$, for every pairwise
  disjoint family $\mathcal A\s[\kappa]^{\sigma}$ of size $\kappa$,  
and every color $\gamma<\theta$,
there are $a,b\in \mathcal A$ with
$\sup(a)<\min(b)$ such that $c[a\times b]=\{\gamma\}$. 
So, $\pr_1(\kappa,\kappa,\theta,2)$ coincides with $\kappa\nrightarrow[\kappa]^2_\theta$,
and $\pr_1(\kappa,\kappa,\theta,3)$ implies the rectangular relation $\kappa\nrightarrow[\kappa\circledast\kappa]^2_{\theta}$.
A survey of key results in the study of $\pr_1(\kappa,\kappa,\theta,\chi)$ is given in the introduction to \cite{paper52}.

Motivated by work of Hajnal and Juh\'{a}sz \cite{MR336705}
and by Roitman \cite{MR486845} that connected strong
colorings and topology, Shelah \cite{Sh:282} identified a fourth
aspect of strengthening a coloring: instead of requiring
$c\restriction(a\times b)$ to be a constant function with some
prescribed value $\gamma$, one requires $c\restriction(a\times b)$ to
realize some arbitrary prescribed finite pattern $g$. 
Specifically, the principle $\pr_0(\kappa,\kappa,\theta,\chi)$ asserts the existence of a coloring $c:[\kappa]^2\rightarrow\theta$
with the property that for every $\sigma<\chi$, for every pairwise
disjoint family $\mathcal A\s[\kappa]^{\sigma}$ of size $\kappa$,  
and every pattern $g:\sigma\times\sigma\rightarrow\theta$,
there are $a,b\in \mathcal A$ with
$\sup(a)<\min(b)$ such that $c(a(i),b(j))=g(i,j)$ for all $i,j<\sigma$.\footnote{Here $a(i)$ stands the for the $i^{\text{th}}$ element of $a$, that is, the unique $\alpha\in a$ to satisfy $\otp(a\cap\alpha)=i$.}

\subsection{Strong colorings over partitions} In order to motivate the definition of strong colorings \emph{over partitions} \cite{strongcoloringpaper},
we first explain how to adapt classical Ramsey relations to this context.
The standard positive Ramsey relation $\kappa\rightarrow(\lambda)^{2}_{\theta}$ 
asserts that for every coloring $c:[\kappa]^2\rightarrow\theta$
there exists a set $A\s\kappa$ of cardinality $\lambda$ such that $c\restriction[A]^2$ is constant.
Such a set  $A$ for which $\{c(\alpha,\beta)\mid(\alpha,\beta)\in [A]^{2}\}$ has size no more than $1$
is called \emph{$c$-homogeneous}.

Given a partition $p:[\kappa]^{2}\rightarrow\mu$ of the unordered pairs from $\kappa$ into $\mu$ cells,
it is possible to relax the notion of $c$-homogeneity to \emph{relative}
$c$-homogeneity \emph{over $p$}. A set $A\subseteq \kappa$ is \emph{$c$-homogeneous over $p$}, or
\emph{$(p,c)$-homogeneous} for short, if all pairs from $A$ which lie in the
same $p$-cell are colored by one color which depends on the cell. More formally,
for every cell $\epsilon<\mu$, the set $\{c(\alpha,\beta)\mid(\alpha,\beta)\in [A]^{2}\ \&\ p(\alpha,\beta)=\epsilon\}$ has size
no more than $1$. Put differently, when restricted to  $[A]^{2}$, $p$ refines $c$, namely the $p$-cell of
a pair in $A$ determines its $c$-color. Thus, $A$ is $(p,c)$-homogeneous iff there
is a function $\tau:\mu\rightarrow\theta$ such that $c(\alpha,\beta)=\tau(p(\alpha,\beta))$ for every $(\alpha,\beta)\in [A]^{2}$.

The Ramsey relation $\kappa\rightarrow(\lambda)^{2}_{\theta}$ can now be relaxed, for a
partition $p$, to its ``over $p$'' version, $\kappa\to_{p}(\lambda)^{2}_{\theta}$, to mean that for
every coloring $c:[\kappa]^{2}\rightarrow\theta$ there is a set $A\subseteq \kappa$ of size $\lambda$ which is
\emph{$c$-homogeneous over $p$}. Note that if $A$ is $c$-homogeneous, it is also
$(p,c)$-homogeneous for every partition $p$ of the pairs from $\kappa$, so
$\kappa\to_{p} (\lambda)^{2}_{\theta}$ follows
from $\kappa\rightarrow(\lambda)^{2}_{\theta}$ for any partition $p$.

Part~I of this series \cite{paper49} is devoted to the consistency of positive Ramsey relations over partitions. 
For instance, it is shown that
Martin's Axiom for $\aleph_{1}$  implies that
$\aleph_{1}\rightarrow_{p}(\aleph_{1})^{2}_{\aleph_0}$ holds for many partitions
$p:[\aleph_1]^{2}\rightarrow\aleph_0$. This means that also the 
Sierpi{\'n}ski coloring can be relatively homogenizied over suitable countable partitions.

In this paper, which constitutes Part~II of this series, we deal with strong colorings over partitions,
that is, with the failure of the ``over p'' version of the weak Ramsey relation
$\kappa\rightarrow[\lambda]^{2}_{\theta}$.

We recall that the relation $\kappa\rightarrow[\lambda]^{2}_{\theta}$
means that for every coloring $c:[\kappa]^2\rightarrow\theta$
there exists a set $A\s\kappa$ of cardinality $\lambda$ such that $c\restriction[A]^2$ is not surjective,
i.e., one of the colors $\gamma<\theta$ is \emph{omitted} 
in the sense that $c(\alpha,\beta)\neq\gamma$ for every pair $(\alpha,\beta)\in[A]^2$.
Now, given a partition $p:[\kappa]^2\rightarrow\mu$,
the relation $\kappa\rightarrow_{p} [\lambda]^{2}_{\theta}$ is defined to mean that for every
coloring $c:[\kappa]^{2}\rightarrow\theta$ there is a set $A\subseteq \kappa$ of cardinality $\lambda$ 
such that at least one color from $\theta$ is omitted by $c$ in every $p$-cell.
That is, for some function $\tau:\mu\rightarrow\theta$,
$c(\alpha,\beta)\not=\tau(p(\alpha,\beta))$ for every $(\alpha,\beta)\in [A]^{2}$.

The strong coloring symbol over $p$, $\kappa\nrightarrow_{p}[\lambda]^{2}_{\theta}$, which is simply the
negation of the positive relation above, means, then, that for every $A\subseteq \kappa$ of
cardinality $\lambda$, for every
function $\tau:\mu\rightarrow\theta$ there is a pair $(\alpha,\beta)\in[A]^2$ such that $c(\alpha,\beta)=\tau(p(\alpha,\beta))$. 
By \cite[Fact~5]{strongcoloringpaper}, this is the same as asserting that 
for every $A\subseteq \kappa$ of cardinality $\lambda$, there is an $\epsilon<\mu$
such that $\{ c(\alpha,\beta)\mid (\alpha,\beta)\in[A]^2\ \&\ p(\alpha,\beta)=\epsilon\}=\theta$.

To sum up, in  Ramsey theory over partitions the role of a color $\gamma$ 
is taken by a function $\tau$ from $p$-cells to colors. Restriction to a single color becomes
restriction  to
a single function, omitting a color becomes omitting a function and  attaining all colors becomes attaining all  functions $\tau$ via $c(\alpha,\beta)=\tau(p(\alpha,\beta))$.

Therefore, the ``over $p$'' versions of the strong coloring principles $\pr_1$ and $\pr_0$ are defined as follows:
\begin{defn}[\cite{strongcoloringpaper}]\label{def11}
Let $p:[\kappa]^2\rightarrow\mu$ be a partition. 
A coloring $c:[\kappa]^2\rightarrow\theta$ is said to witness
\begin{itemize}
\item $\pr_1(\kappa,\kappa,\theta,\chi)_p$ iff 
for every $\sigma<\chi$, every pairwise
  disjoint family $\mathcal A\s[\kappa]^{\sigma}$ of size $\kappa$,
and every function $\tau:\mu\to\theta$ there are $a,b\in \mathcal A$ with $\sup(a)<\min(b)$ such that
$$c(\alpha,\beta)=\tau(p(\alpha,\beta))\text{ for all }\alpha\in a\text{ and }\beta\in b;$$
\item $\pr_0(\kappa,\kappa,\theta,\chi)_p$  iff for
for every $\sigma<\chi$, every pairwise
  disjoint family $\mathcal A\s[\kappa]^{\sigma}$ of size $\kappa$,
and every matrix $(\tau_{i,j})_{ i,j<\sigma}$ of
functions from $\mu$ to $\theta$
there are $a,b\in \mathcal A$ with $\sup(a)<\min(b)$ such that
$$c(a(i),b(j))=\tau_{i,j}(p(a(i),b(j)))\text{ for all }i,j<\sigma.$$
\end{itemize}
\end{defn}

Every coloring which is strong over $p$ is also strong over any partition $p'$ coarser than $p$. However,
no coloring is strong in any of the ways defined above over any non-trivial coarsening of itself (see Proposition~\ref{commonfactor} below).
In particular, for every coloring $c$  by more than one color there are  partitions of pairs to just two cells over which the coloring is no longer strong.

The study of strong colorings over partitions is, then, concentrated on the
strcuture of the space of all colorings witnessing any of the relations defined above over small
partitions: for which colorings $c$ and small partitions $p$, does $c$
witness a negative Ramsey relation over $p$.

\subsection{The results} 
The findings of Part~I show that unlike classical Ramsey theory of the 
uncountable, which steers towards the negative side, 
suitable forcing axioms for a prescribed uncountable cardinal $\lambda$ imply that certain small partitions $p$ satisfy $\lambda^+\rightarrow_p(\lambda^+)^2_\lambda$.
Thus, arbitrary large successor cardinals $\kappa$ may be ``$p$-weakly compact'' in the sense that $\kappa\rightarrow_p(\kappa)^2_2$.

On the other hand, strong negative Ramsey relations over partitions are also
available: Under $\gch$-type assumptions, results from the classical theory
prevail to the new context, for instance, by
\cite[Lemma~9]{strongcoloringpaper}, for any partition $p:[\kappa]^2\rightarrow\mu$ and $i<2$,
$\pr_i(\kappa,\kappa,\theta^\mu,\chi)$ outright implies $\pr_i(\kappa,\kappa,\theta,\chi)_p$.

In the present paper we aim for absolute results rather than independence
results. When the space of strong colorings over a prescribed partition
$p:[\kappa]^2\rightarrow\mu$ is not empty, we explore which colorings this
space must contain.

\medskip

A standard fact from classical theory is that for every successor cardinal $\kappa=\lambda^+$, $\kappa\nrightarrow[\kappa]^2_\lambda$ implies $\kappa\nrightarrow[\kappa]^2_\kappa$.
What happens in the new context? Is it the case that
$\kappa\nrightarrow_p[\kappa]^2_\lambda$ implies $\kappa\nrightarrow_p[\kappa]^2_\kappa$
for every partition $p$? 
And what happens with the stronger relations in Definition~\ref{def11}?
For instance, in \cite{MR3087059}, Eisworth solved a longstanding open problem by
proving that for every singular cardinal $\lambda$,
$\pr_1(\lambda^+,\lambda^+,\lambda,\cf(\lambda))$ implies
$\pr_1(\lambda^+,\lambda^+,\lambda^+,\cf(\lambda))$. 
Is it the case that
$\pr_1(\lambda^+,\lambda^+,\lambda,\cf(\lambda))_p$ implies
$\pr_1(\lambda^+,\lambda^+,\lambda^+,\cf(\lambda))_p$
for every partition $p$?
Whether the same implication holds with  $\lambda=\aleph_0$
was asked in \cite[Question~47]{strongcoloringpaper}.
The first result of this paper answers this question in the affirmative. 

Since the classical theory corresponds to the special case of a partition $p$ with just one cell,
the following result generalizes (and also provides a new proof of) Eisworth’s pump-up theorem:

\begin{thma} 
For every infinite cardinal  $\lambda$, for every partition $p:[\lambda^+]^2\rightarrow\lambda$,
and for every cardinal $\chi\le\cf(\lambda)$,
$$\pr_1(\lambda^+,\lambda^+,\lambda,\chi)_p\iff\pr_1(\lambda^+,\lambda^+,\lambda^+,\chi)_p.$$
\end{thma}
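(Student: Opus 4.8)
The plan is as follows. The backward implication is immediate: given a coloring $d\colon[\lambda^+]^2\to\lambda^+$ witnessing $\pr_1(\lambda^+,\lambda^+,\lambda^+,\chi)_p$ and any surjection $\pi\colon\lambda^+\to\lambda$, the composition $\pi\circ d$ witnesses $\pr_1(\lambda^+,\lambda^+,\lambda,\chi)_p$, since every target $\tau\colon\mu\to\lambda$ lifts to some $\tau'\colon\mu\to\lambda^+$ with $\pi\circ\tau'=\tau$, and applying the hypothesis to $\tau'$ and pushing the conclusion through $\pi$ finishes. So the real content is the forward implication, which I would prove by \emph{stretching} a given witness $c\colon[\lambda^+]^2\to\lambda$ of $\pr_1(\lambda^+,\lambda^+,\lambda,\chi)_p$ into a witness $d\colon[\lambda^+]^2\to\lambda^+$ of $\pr_1(\lambda^+,\lambda^+,\lambda^+,\chi)_p$.

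The naive attempt --- composing $c$ with fixed bijections $e_\beta\colon\lambda\to\beta$, setting $d(\alpha,\beta):=e_\beta(c(\alpha,\beta))$ --- does produce all $\lambda^+$ colors, but its verification collapses: hitting a target value in $[\lambda,\lambda^+)$ on a pair $(\alpha,\beta)$ forces $c$ to attain the value $e_\beta^{-1}(\tau(p(\alpha,\beta)))$, a ``decoding scale'' depending on the top ordinal $\beta$ (and on the $p$-cell), and in dimension $\sigma\ge 2$ --- or already in dimension $1$ over a nontrivial partition --- there are far too many such scales to stabilize over a family of size $\lambda^+$. The fix I would pursue is the one behind Eisworth's theorem: build $d$ out of $c$ together with a fixed $C$-sequence $\langle C_\gamma\mid\gamma<\lambda^+\rangle$ carrying a suitable (ZFC-provable) club-guessing sequence on $S:=\{\delta<\lambda^+\mid\cf(\delta)=\cf(\lambda)\}$ with $\otp(C_\delta)=\cf(\lambda)$, and --- when $\lambda$ is singular --- a scale in $\prod_{i<\cf(\lambda)}\lambda_i$ for some increasing cofinal $\langle\lambda_i\mid i<\cf(\lambda)\rangle$ in $\lambda$; the coloring $d$ is obtained from these by a walk-based transformation (using $\Tr$ along $\langle C_\gamma\rangle$) designed so that the strength of $c$ over $p$ \emph{transfers} to the top levels of the walks relative to the guessed clubs.

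Granting such a $d$ and the matching transfer lemma, the verification goes like this. Fix a pairwise disjoint family $\mathcal A\s[\lambda^+]^\sigma$ of size $\lambda^+$ with $\sigma<\chi\le\cf(\lambda)$ and a target $\tau\colon\mu\to\lambda^+$; since $|\mu|=\lambda$, the ordinal $\delta_0:=\sup(\im(\tau))$ lies below $\lambda^+$. Run a continuous $\in$-chain $\langle M_\xi\mid\xi<\lambda^+\rangle$ of size-$\lambda$ elementary submodels of a large enough structure, each containing all relevant parameters ($c$, $\langle C_\gamma\rangle$, $\mathcal A$, $\tau$, and the scale when $\lambda$ is singular); form the club $E:=\{M_\xi\cap\lambda^+\mid\xi<\lambda^+\}$; and apply club-guessing to the club $E\setminus(\delta_0+1)$ to get $\delta\in S$ with $C_\delta\s E\setminus(\delta_0+1)$. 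Because $\sigma<\cf(\lambda)=\cf(\delta)$, every $a\in\mathcal A$ meets only $<\cf(\lambda)$ of the $\cf(\lambda)$-many ``blocks'' cut out of $\delta$ by $C_\delta$, and the walk-parameters it generates against $C_\delta$ are bounded below $\delta$; since there are only $\cf(\lambda)\le\lambda$ blocks, I would thin $\mathcal A$ to a sub-family $\mathcal A^*$, still of size $\lambda^+$, on which all of this data is uniform. That uniformity pins down a translation $\tau\mapsto\tau'\colon\mu\to\lambda$ --- obtained by composing $\tau$ with fixed bijections $\lambda\to\gamma$ attached to the relevant level ordinals $\gamma$, all of which exceed $\delta_0$, so the translation is coherent across all $\mu$ cells simultaneously (this is exactly where the smallness $|\mu|=\lambda$ of the cell set is used). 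One then applies $\pr_1(\lambda^+,\lambda^+,\lambda,\chi)_p$ to $c$, $\mathcal A^*$ and $\tau'$, obtaining $a,b\in\mathcal A^*$ with $\sup(a)<\min(b)$ and $c(\alpha,\beta)=\tau'(p(\alpha,\beta))$ for all $\alpha\in a$, $\beta\in b$, and the transfer lemma upgrades this to $d(\alpha,\beta)=\tau(p(\alpha,\beta))$, as required.

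The hard part will be exactly the piece I am black-boxing: designing the walk-based transformation producing $d$ and proving the transfer lemma --- that $c$'s strength over $p$ really does propagate, through the walks, to the levels selected by the club-guessing sequence, in a form robust enough to survive the thinning to $\mathcal A^*$. The dimension restriction $\sigma<\chi\le\cf(\lambda)$ enters precisely to keep the various $\sigma$-indexed suprema of walk-parameters below $\delta$, which is what licenses the pigeonhole over the $\cf(\lambda)$-many blocks; the partition $p$ is carried through essentially for free, because $\im(\tau)$ has size at most $|\mu|=\lambda$ and so sits below a single level, making the cell-wise translation $\tau\mapsto\tau'$ uniform. For $\lambda$ regular, $\cf(\lambda)=\lambda$ and the $C$-sequence on $S^{\lambda^+}_\lambda$ suffices with no scale; the case $\lambda=\aleph_0$ settles \cite[Question~47]{strongcoloringpaper}; and the case $\mu=1$ recovers, with a new proof, Eisworth's pump-up theorem.
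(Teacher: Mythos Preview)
Your high-level plan is sound and broadly parallel to the paper's: the backward direction is trivial, the naive stretch $e_\beta(c(\alpha,\beta))$ fails for exactly the reason you give, and walks are indeed the right tool. But the piece you explicitly black-box is precisely the paper's contribution, and the specific mechanism you gesture at --- adapting Eisworth's transformation via club-guessing and scales --- is one the authors explicitly set aside: they remark that ``at the moment it is unclear whether such transformations can overcome partitions,'' and proceed differently. So as it stands the proposal is an outline of a strategy whose feasibility is exactly the open point.

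The missing idea is this. The paper defines $c^+(\alpha,\beta):=e_\gamma(i)$ where \emph{both} $i$ and the choice of $\gamma$ are read off the single value $c(\alpha,\beta)$: fix a bijection $\pi:\lambda\leftrightarrow\lambda\times\lambda$, set $(i,\zeta):=\pi(c(\alpha,\beta))$, and use $\zeta$ as a \emph{halting condition} along the walk $\Tr(\alpha,\beta)$ to select $\gamma$ (via an almost-disjoint family $\langle Z_\delta\mid\delta<\lambda^+\rangle$ in the non-reflecting-stationary case, or via order-types of the ladders $C_\delta$ in the singular-uncountable-cofinality case). In the verification one first stabilizes, by Fodor over a stationary set of walk-intermediate ordinals $\delta$, a single halting parameter $\zeta'$ and the associated $\gamma$; then one defines $\tau'(\epsilon):=\pi^{-1}(i_\epsilon,\zeta')$ with $e_\gamma(i_\epsilon)=\tau(\epsilon)$ and applies the $\pr_1$-hypothesis for $c$ to $\tau'$. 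This forces $c(\alpha,\beta)$ to encode simultaneously the correct index $i_\epsilon$ \emph{and} the halting instruction $\zeta'$, so the walk from $\beta$ down to $\alpha$ stops at $\delta$ and $c^+(\alpha,\beta)=e_\gamma(i_\epsilon)=\tau(\epsilon)$. Your outline never says how $d$ is to depend on $c$, and without this two-coordinate encoding there is no reason the translation $\tau\mapsto\tau'$ should cohere with whatever the walk does on the pair $(\alpha,\beta)$. Two further differences: the paper uses no scales at all, and it handles the singular-countable-cofinality case by a separate, walk-free argument via the Engelking--Kar\l owicz theorem (since then $\chi\le\aleph_0$ forces $\lambda^{<\chi}=\lambda$), rather than folding it into the walk machinery.
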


Our proof of the preceding brings the method of walks on ordinals into the study of
strong colorings over partitions.
The proof actually provides an operator from colorings
$c$ by $\lambda$ colors to colorings $c^+$ by $\lambda^+$ colors
with the property that for every partition $p$ to $\le\lambda$ cells,
if $c$ is strong over $p$ then $c^+$ is also strong
over $p$. 

\medskip

In \cite[Lemma~4.5]{sh365}, Shelah presented sufficient cardinal arithmetic conditions for
$\pr_{1}(\kappa,\kappa,\theta,\chi)$ to imply the stronger $\pr_0(\kappa,\kappa,\theta,\chi)$.
\cite[Question~46]{strongcoloringpaper}  asks 
whether it is possible to obtain the same pump-up  over a
partition $p$ for $\kappa=\aleph_1$.  The following theorem
provides a general affirmative answer.

\begin{thmb} For a regular uncountable cardinal $\kappa$ and cardinals $\mu,\lambda,\chi,\theta\le\kappa$
satisfying $\lambda^{<\chi}<\kappa\le2^\lambda$ and
$\lambda^{<\chi}\le\theta^{<\chi}=\theta$,
for every partition
$p:[\kappa]^2\rightarrow\mu$,
$$\pr_1(\kappa,\kappa,\theta,\chi)_p\iff\pr_0(\kappa,\kappa,\theta,\chi)_p.$$
\end{thmb}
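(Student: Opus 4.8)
The equivalence's easy half, $\pr_0(\kappa,\kappa,\theta,\chi)_p\Rightarrow\pr_1(\kappa,\kappa,\theta,\chi)_p$, needs no hypotheses: any witness to the former witnesses the latter, since a single function $\tau:\mu\to\theta$ is just the constant matrix $\tau_{i,j}:=\tau$. So the plan is to prove the forward direction by an encoding construction, following the template of \cite[Lemma~4.5]{sh365} but leaving room for the cells of $p$. Fix a coloring $d:[\kappa]^2\to\theta$ witnessing $\pr_1(\kappa,\kappa,\theta,\chi)_p$; the task is to manufacture from it a coloring $c:[\kappa]^2\to\theta$ witnessing $\pr_0(\kappa,\kappa,\theta,\chi)_p$.

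First I would set up the scaffolding. Note that $\lambda$ must be infinite (as $\kappa\le2^\lambda$ and $\kappa>\aleph_0$) and $\chi\le\lambda^+$ (else $\lambda<\chi$ and $\lambda^{<\chi}\ge\lambda^\lambda\ge2^\lambda\ge\kappa$), so $|[\lambda]^{<\chi}|=\lambda^{<\chi}$. Using $\kappa\le2^\lambda$, fix a sequence $\langle B_\alpha\mid\alpha<\kappa\rangle$ of pairwise distinct subsets of $\lambda$, to be used as position \emph{tags}. Let $\Theta$ be the set of all triples $(u,\bar s,\bar\gamma)$ where $u\in[\lambda]^{<\chi}$, $\bar s=\langle s_i\mid i<\sigma\rangle$ is a sequence of pairwise distinct subsets of $u$ for some $\sigma<\chi$, and $\bar\gamma\in{}^{\sigma\times\sigma}\theta$; the hypothesis $\lambda^{<\chi}\le\theta^{<\chi}=\theta$ yields $|\Theta|\le\theta$ after a routine computation, so one may fix a surjection $q:\theta\to\Theta$. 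Put $e:=q\circ d$. Since composing a $\pr_1$-witness with a surjection onto $\le\theta$ colors again witnesses $\pr_1$ (lift a target $T:\mu\to\Theta$ through $q$ to some $\tau:\mu\to\theta$ and apply $d$ to $\tau$), the coloring $e$ has the following property: for every $\sigma<\chi$, every pairwise-disjoint $\mathcal A\s[\kappa]^\sigma$ of size $\kappa$, and every $T:\mu\to\Theta$, there are $a,b\in\mathcal A$ with $\sup(a)<\min(b)$ and $e(\alpha,\beta)=T(p(\alpha,\beta))$ for all $\alpha\in a$, $\beta\in b$.

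Then I would define $c$ as follows: given $\alpha<\beta$, decode $e(\alpha,\beta)=(u,\bar s,\bar\gamma)$ with $\bar s$ of length $\sigma$; if there is a (necessarily unique) $i^*<\sigma$ with $B_\alpha\cap u=s_{i^*}$ and a unique $j^*<\sigma$ with $B_\beta\cap u=s_{j^*}$, set $c(\alpha,\beta):=\bar\gamma_{i^*,j^*}$, and otherwise $c(\alpha,\beta):=0$. To verify that $c$ works, take $\sigma<\chi$, a pairwise-disjoint $\mathcal A\s[\kappa]^\sigma$ of size $\kappa$, and a matrix $(\tau_{i,j})_{i,j<\sigma}$ of functions $\mu\to\theta$. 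For each $a\in\mathcal A$, pick $u_a\in[\lambda]^{<\chi}$ separating the distinct sets $B_{a(0)},\dots,B_{a(i)},\dots$ (take a point of $B_{a(i)}\symdiff B_{a(j)}$ for each $i<j<\sigma$), so that $\langle B_{a(i)}\cap u_a\mid i<\sigma\rangle$ is a sequence of pairwise distinct sets. Since there are only $\lambda^{<\chi}<\kappa$ possibilities for $(u_a,\langle B_{a(i)}\cap u_a\mid i<\sigma\rangle)$ and $\kappa$ is regular, pass to a size-$\kappa$ subfamily $\mathcal A'$ on which this data is constantly some $(u,\bar s)$ with $\bar s=\langle s_i\mid i<\sigma\rangle$. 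Setting $T(\epsilon):=(u,\bar s,(\tau_{i,j}(\epsilon))_{i,j<\sigma})$ and applying the displayed property of $e$ to $\mathcal A'$ and $T$ yields $a,b\in\mathcal A'$ with $\sup(a)<\min(b)$ and, for all $i,j<\sigma$, $e(a(i),b(j))=(u,\bar s,(\tau_{k,l}(p(a(i),b(j))))_{k,l<\sigma})$; since $a,b\in\mathcal A'$, the index matching $B_{a(i)}\cap u$ among the $s_\bullet$ is exactly $i$ and that matching $B_{b(j)}\cap u$ is exactly $j$, so $c(a(i),b(j))=\tau_{i,j}(p(a(i),b(j)))$, which is precisely what $\pr_0(\kappa,\kappa,\theta,\chi)_p$ demands.

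The hard part, I expect, is not any single calculation but spotting the correct shape of the color set $\Theta$: a fixed coloring cannot tell which tuple an ordinal inhabits, so each position label must be carried in two places at once — statically, wired into the ordinal through $B_\alpha$, and dynamically, realized as the $\bar s$-coordinate of the pattern — with the homogenization step reconciling the two. The three cardinal constraints then play disjoint roles: $\kappa\le2^\lambda$ supplies the tags $B_\alpha$; $\lambda^{<\chi}<\kappa$ together with the regularity of $\kappa$ lets one homogenize an arbitrary family; and $\lambda^{<\chi}\le\theta^{<\chi}=\theta$ crams $\Theta$ into $\theta$ colors. Specializing $\mu=1$ recovers (and reproves) Shelah's classical pump-up theorem, so no further machinery — in particular, no walks on ordinals — appears to be needed here.
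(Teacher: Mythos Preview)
Your proof is correct and follows essentially the same approach as the paper's Theorem~\ref{pr0topr1}: both fix an injective family of subsets of $\lambda$ as position tags, encode into a single color the separating window together with the intended pattern, and then stabilize the window over a $\kappa$-sized subfamily before invoking the $\pr_1$ witness on the induced $\tau^*$. The only cosmetic differences are that the paper packages the decoding as a family of auxiliary functions $h_j$ (rather than precomposing with a surjection $q$) and records the tag data as a pair $(y^0,y^1,\mathcal Z)$ with $\mathcal Z$ an unordered set (the two windows being needed only for the unbalanced Clause~(2) there, which Theorem~B does not require).
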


\section{Preliminaries}
For cardinals $\chi<\kappa$, $E^\kappa_{\ge\chi}$ denotes the set $\{\alpha<\kappa\mid \cf(\alpha)\ge\chi\}$.
For an ordinal $\sigma$ and a set of ordinals $A$, we write 
$[A]^\sigma$ for $\{ B\s A\mid \otp(B)=\sigma\}$.
For a cardinal $\chi$ and a set $\mathcal A$, we write $[\mathcal A]^\chi:=\{\mathcal B\s\mathcal A\mid |\mathcal B|=\chi\}$ 
and $[\mathcal A]^{<\chi}:=\{\mathcal B\s\mathcal A\mid |\mathcal B|<\chi\}$.
For a set of ordinals $a$ and $b$, we let $\acc(a) := \{\alpha\in a \mid \sup(a \cap \alpha) = \alpha > 0\}$,
and we write $a < b$ if $\alpha < \beta$
for all $\alpha \in a$ and  $\beta \in b$.  
For a set $\mathcal{A}$ which is either an ordinal or a collection of sets
of ordinals, we identify $[\mathcal{A}]^2$ with $\{ (a,b)\in\mathcal A\times\mathcal A\mid a<b\}$.

\begin{defn} For $q:X\rightarrow\nu$ and $p:X\rightarrow\mu$,
$q$ is a \emph{coarsening} of $p$, or $p$ is a \emph{refinement} of $q$, iff
$p(x)=p(y)$ implies  $q(x)=q(y)$ for all $x,y\in X$.
\end{defn}

\begin{prop}\label{commonfactor} For cardinals $\kappa,\nu,\mu,\theta$,
suppose a partition $p:[\kappa]^{2}\rightarrow\mu$ and a coloring $c:[\kappa]^{2}\rightarrow\theta$ have a common coarsening
   $q:[\kappa]^{2}\rightarrow\nu$ that is not constant. Then $c$ does not witness $\kappa\nrightarrow_p[\kappa]^{2}_{\theta}$.
\end{prop}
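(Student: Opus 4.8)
The statement to be proved is that $c$ \emph{fails} to witness $\kappa\nrightarrow_p[\kappa]^2_\theta$. Unwinding the definition given above, it therefore suffices to produce a set $A\s\kappa$ with $|A|=\kappa$ together with a function $\tau:\mu\to\theta$ such that $c(\alpha,\beta)\ne\tau(p(\alpha,\beta))$ for every $(\alpha,\beta)\in[A]^2$. The plan is to take $A:=\kappa$ and simply manufacture the right $\tau$ by hand; no thinning of $\kappa$ will be needed.

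First I would record the two factorizations implicit in the hypothesis. Since $q$ is a coarsening of $p$, the value $q(x)$ depends only on $p(x)$, so there is a function $f:\mu\to\nu$ with $q=f\circ p$; likewise, since $q$ is a coarsening of $c$, there is $g:\theta\to\nu$ with $q=g\circ c$ (on the ordinals lying outside the respective ranges the values of $f$ and $g$ are immaterial). The key observation is that $g$ is not constant: were $g$ constant on $\theta$, then $q=g\circ c$ would be constant, contrary to hypothesis. Consequently $g$ assumes at least two distinct values on $\theta$, so for every $v<\nu$ the fibre $g^{-1}(\{v\})$ is a proper subset of $\theta$; that is, for every $v<\nu$ there is some $\gamma<\theta$ with $g(\gamma)\ne v$. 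Now define $\tau:\mu\to\theta$ by choosing, for each $\epsilon<\mu$, an ordinal $\tau(\epsilon)<\theta$ with $g(\tau(\epsilon))\ne f(\epsilon)$; this is legitimate by the previous sentence applied with $v:=f(\epsilon)$.

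It remains to verify that $A=\kappa$ and this $\tau$ do the job. Fix an arbitrary pair $(\alpha,\beta)\in[\kappa]^2$ and put $\epsilon:=p(\alpha,\beta)$. If we had $c(\alpha,\beta)=\tau(\epsilon)$, then applying $g$ and using the two factorizations would give
$$f(\epsilon)=f(p(\alpha,\beta))=q(\alpha,\beta)=g(c(\alpha,\beta))=g(\tau(\epsilon)),$$
contradicting the choice of $\tau(\epsilon)$. Hence $c(\alpha,\beta)\ne\tau(p(\alpha,\beta))$, and we are done. There is essentially no obstacle in this argument; the only step meriting any care is the deduction that $g$ is non-constant (equivalently, that each fibre of $g$ is a proper subset of $\theta$), since this is precisely — and the only — place where the hypothesis that $q$ is not constant is used, and it is exactly what makes the choice of $\tau$ possible.
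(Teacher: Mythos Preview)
Your proof is correct and follows essentially the same idea as the paper's: both take $A=\kappa$ and exploit that the non-constant common coarsening $q$ forces every $p$-cell to miss some $c$-color. The only cosmetic difference is that the paper works with the cell formulation (showing directly that $c$ restricted to each $p$-cell omits a color, via the containing $q$-cell and a disjoint nonempty $q$-cell), whereas you work with the $\tau$-formulation and encode the coarsening relations algebraically as $q=f\circ p=g\circ c$; your choice of $\tau(\epsilon)$ with $g(\tau(\epsilon))\ne f(\epsilon)$ is exactly the paper's ``pick a color living in a different $q$-cell'' step.
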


\begin{proof}
  Let $X$ be an arbitrary $p$-cell, and we shall show that $c\restriction X$ is not surjective.
  
  As $p$ refines $q$, we may fix  a $q$-cell $\hat X$ that covers $X$. As $q$ is not constant there is some $q$-cell $\hat Y$ disjoint from $\hat X$.
  Fix $y\in\hat Y$ and let $\gamma:=c(y)$.
  Since $c$ refines $q$, $c^{-1}[\{\gamma\}]\s \hat Y$. As
  $X\s \hat X$ and $\hat X\cap \hat Y=\emptyset$, it holds that
  $\gamma\neq c(x)$ for all $x\in X$.
\end{proof}

Here and also in Part~III of this series, we study \emph{unbalanced} versions of the principles of Definition~\ref{def11}.

\begin{defn}\label{prelations}
Let $p:[\kappa]^2\rightarrow\mu$ be a partition. 
A coloring $c:[\kappa]^2\rightarrow\theta$ is said to witness
\begin{itemize}
\item $\pro{\kappa}{\nu}{\nu'}{\theta}{\chi}_p$
iff for every pairwise disjoint subfamilies $\mathcal A,\mathcal B$ of $[\kappa]^\sigma$ 
with $|\mathcal A|=\nu$, $|\mathcal B|=\kappa$ and $\sigma<\chi$
there is $\mathcal A'\in[\mathcal A]^{\nu'}$ such that
for every function $\tau:\mu\to\theta$,
there are $a\in\mathcal A'$ and $b\in\mathcal B$ with $a<b$ such that
$$c(\alpha,\beta)=\tau(p(\alpha,\beta))\text{ for all }\alpha\in a\text{ and }\beta\in b;$$

\item $\prz{\kappa}{\nu}{\nu'}{\theta}{\chi}_p$
iff for every pairwise disjoint subfamilies $\mathcal A,\mathcal B$ of $[\kappa]^\sigma$ 
with $|\mathcal A|=\nu$, $|\mathcal B|=\kappa$ and $\sigma<\chi$,
there is $\mathcal A'\in[\mathcal A]^{\nu'}$ such that
for every matrix $(\tau_{i,j})_{ i,j<\sigma}$ of
functions from $\mu$ to $\theta$,
there are $a\in\mathcal A'$ and $b\in\mathcal B$ with $a<b$ such that
$$c(a(i),b(j))=\tau_{i,j}(p(a(i),b(j)))\text{ for all }i,j<\sigma.$$
\end{itemize}
\end{defn}
\begin{remark}

  We write $\pr_i(\kappa,\nu\circledast\kappa,\theta,\chi)_p$ for
$\pri{\kappa}{\nu}{\nu}{\theta}{\chi}_p$. 
\end{remark}

\begin{defn} A partition $p:[\kappa]^2\rightarrow\mu$ is said to have \emph{injective fibers} iff for all $\alpha<\alpha'<\beta$, $p(\alpha,\beta)\neq p(\alpha',\beta)$.
\end{defn}

The next proposition shows that in order to obtain strong colorings over all partitions, it suffices to focus on partitions with injective fibers.

\begin{prop} \label{onlyinjmatters} For every infinite cardinal $\lambda$ and every partition $p:[\lambda^+]^2\rightarrow\lambda$,
  there exists a corresponding partition
  $\bar p:[\lambda^+]^2\rightarrow\lambda$ with injective fibers such that if
any  strong coloring relation from Definitions \ref{def11} and \ref{prelations} holds for $\bar p$, then it also holds for $p$.
\end{prop}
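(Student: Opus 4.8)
The plan is to produce $\bar p$ by "tagging" each $p$-cell with the smaller endpoint of the pair, so that no information is lost but fibers become injective. Concretely, for each $\beta<\lambda^+$ fix a bijection $e_\beta\colon\beta\to|\beta|\le\lambda$, and define $\bar p\colon[\lambda^+]^2\to\lambda$ by first coding the pair $(p(\alpha,\beta),e_\beta(\alpha))\in\lambda\times\lambda$ via a fixed bijection $\lambda\times\lambda\cong\lambda$. Then for $\alpha<\alpha'<\beta$ we have $e_\beta(\alpha)\ne e_\beta(\alpha')$, so $\bar p(\alpha,\beta)\ne\bar p(\alpha',\beta)$; thus $\bar p$ has injective fibers. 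The crucial point is that $\bar p$ \emph{refines} $p$: if $\bar p(\alpha,\beta)=\bar p(\alpha',\beta')$ then in particular the first coordinates agree, i.e. $p(\alpha,\beta)=p(\alpha',\beta')$. So every $\bar p$-cell is contained in a single $p$-cell.

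Next I would transfer each of the strong coloring relations along this refinement. The underlying mechanism is uniform: whenever $\bar p$ refines $p$, a function $\tau\colon\lambda\to\theta$ (or a matrix of such functions) on the $p$-side can be precomposed with the canonical map sending a $\bar p$-cell to the $p$-cell containing it, yielding a corresponding $\tau'$ on the $\bar p$-side with $\tau'(\bar p(\alpha,\beta))=\tau(p(\alpha,\beta))$ for all pairs. Hence, given a coloring $c$ witnessing, say, $\pr_1(\lambda^+,\lambda^+,\lambda,\chi)_{\bar p}$, the very same $c$ witnesses $\pr_1(\lambda^+,\lambda^+,\lambda,\chi)_p$: for a pairwise disjoint family $\mathcal A$ and a target $\tau\colon\lambda\to\theta$, apply the $\bar p$-relation to the induced $\tau'$ to obtain $a,b\in\mathcal A$ with $\sup(a)<\min(b)$ and $c(\alpha,\beta)=\tau'(\bar p(\alpha,\beta))=\tau(p(\alpha,\beta))$ for all $\alpha\in a,\beta\in b$. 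The same one-line argument handles $\pr_0$ (apply it to the matrix $(\tau'_{i,j})$) and, verbatim, the unbalanced variants $\pro{\kappa}{\nu}{\nu'}{\theta}{\chi}_p$ and $\prz{\kappa}{\nu}{\nu'}{\theta}{\chi}_p$ of Definition~\ref{prelations}, since refining the partition only shrinks the set of targets one must hit while the witnessing pair stays the same. I would phrase this once as a short lemma ("if $\bar p$ refines $p$ then every coloring strong over $\bar p$ is strong over $p$ in each of these senses") and then simply note that the $\bar p$ constructed above refines $p$ and has injective fibers and range $\lambda$.

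The main thing to be careful about is not an obstacle so much as bookkeeping: one must check that $\bar p$ genuinely has range of size $\le\lambda$ (this is why $\lambda^+$ is a successor — each $e_\beta$ has range in $\lambda$, and $|\lambda\times\lambda|=\lambda$), and that "refines" is used with the correct variance in every one of the four relations, including the unbalanced ones where the roles of $\mathcal A$ and $\mathcal B$ are asymmetric — but the witnessing pair $(a,b)$ and the choice of $\mathcal A'$ are produced by the $\bar p$-relation and carried over unchanged, so no issue arises. A minor subtlety worth a remark: the converse direction (strong over $p$ implying strong over $\bar p$) is \emph{false} in general by Proposition~\ref{commonfactor}, since $p$ is a nontrivial coarsening of $\bar p$; the proposition as stated only asserts the direction we need, namely that constructing strong colorings for all injective-fiber partitions suffices to get them for all partitions.
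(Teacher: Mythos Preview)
Your proposal is correct and follows essentially the same approach as the paper: construct $\bar p$ as a refinement of $p$ with injective fibers by pairing $p(\alpha,\beta)$ with an auxiliary tag in $\lambda$, then transfer each strong-coloring relation by precomposing the target function(s) with the projection from $\bar p$-cells to $p$-cells. The only cosmetic difference is the choice of tag---you use the enumeration index $e_\beta(\alpha)$, while the paper uses the occurrence count $\otp\{\varepsilon<\epsilon\mid p(i_\beta(\varepsilon),\beta)=p(i_\beta(\epsilon),\beta)\}$; your version is in fact slightly simpler and works just as well.
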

\begin{proof} Given $p:[\lambda^+]^2\rightarrow\lambda$, we define $q:[\lambda^+]^2\rightarrow\lambda\times\lambda$ as follows.
Fix an arbitrary nonzero $\beta<\lambda^+$. Fix a bijection $i_\beta:|\beta|\leftrightarrow\beta$. Then, for every $\epsilon<|\beta|$, let
$$q(i_\beta(\epsilon),\beta):=(p(i_\beta(\epsilon),\beta),\otp\{\varepsilon<\epsilon\mid p(i_\beta(\varepsilon),\beta)=p(i_\beta(\epsilon),\beta)\}).$$

It is easy to check that, for all $\alpha<\beta<\lambda^+$:
\begin{itemize}
\item $q(\alpha,\beta)=(p(\alpha,\beta),\zeta)$ for some $\zeta<\lambda$;
\item $q(\alpha',\beta)\neq q(\alpha,\beta)$  for all $\alpha'<\alpha$.
\end{itemize}
Finally, fix a bijection $\pi:\lambda\leftrightarrow\lambda\times\lambda$ and set $\bar p:=\pi^{-1}\circ q$.

Then, to any function $\tau\in {}^\lambda\theta$, we define the corresponding function $\bar\tau\in{}^\lambda\theta$
such that, for all $\eta<\lambda$, if $\pi(\eta)=(\xi,\zeta)$, then $\bar\tau(\eta)=\tau(\xi)$.
\end{proof}

\section{Theorem~A: increasing the number of colors}\label{pumpingup}

This section deals with the problem of pumping up a strong coloring
by $\lambda$  colors into one by $\lambda^+$ colors. 
The special case of a
singular cardinal $\lambda$ with no partition involved is a result that was first
obtained by Eisworth as a corollary to his transformation theorem of
\cite{MR3087059}. While the theory of transformations has advanced considerably
\cite{paper44,paper45}, at the moment it is unclear whether such transformations
can overcome partitions. Thus, the proof given below is different.

When taking partitions into account, as the phrasing of Question~46 from
\cite{strongcoloringpaper} hints, one ought to expect that different stretchings
of the same coloring might be needed for different partitions. It is surprising,
then, that a coloring can be stretched once in a way which uniformly works for
all partitions it is strong over. Indeed, the main corollary of this section
reads as follows.

\begin{cor} 
  Let $\lambda$ be an infinite cardinal.
  
  For every coloring
$c:[\lambda^+]^2\rightarrow\lambda$
there exists a corresponding coloring $c^+:[\lambda^+]^2\rightarrow\lambda^+$
such that for every partition $p:[\lambda^+]^2\rightarrow\lambda$ and every cardinal $\chi\le\cf(\lambda)$:
\begin{enumerate}
\item if $c$ witnesses $\pr_1(\lambda^+,\lambda^+,\lambda,\chi)_p$
then $c^+$ witnesses $\pr_1(\lambda^+,\lambda^+,\lambda^+,\chi)_p$;
\item if $c$ witnesses $\pr_1(\lambda^+,\lambda^+\circledast\lambda^+,\lambda,\chi)_p$
then $c^+$ witnesses $\pr_1(\lambda^+,{\lambda^+\circledast\lambda^+},\allowbreak\lambda^+,\chi)_p$.
\end{enumerate}
\end{cor}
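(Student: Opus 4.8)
\emph{Strategy.} The reverse implications of Theorem~A are trivial monotonicity (a coloring realizing every target $\tau\colon\lambda\to\lambda^+$ realizes in particular every $\tau\colon\lambda\to\lambda$, and similarly for the rectangular variant), so the content of the corollary is the construction of a single operator $c\mapsto c^+$ satisfying clauses~(1) and~(2). I would build $c^+$ by \emph{walks on ordinals below $\lambda^+$}, recoding the $\lambda$ colors of $c$ into $\lambda^+$ colors through a fixed coherent system of enumerations keyed to a walk-derived ``address''. The verification rests on two facts: that $\lambda^+$ is regular, so every $\tau\colon\lambda\to\lambda^+$ has range bounded below some $\delta<\lambda^+$; and that minimal walks are rigid enough that, via an elementary submodel of size $\lambda$, the address can be pinned down on the relevant rectangles --- turning the instance for $c^+$ over $p$ into an instance for $c$ over the \emph{same} partition $p$. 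That the pinning-down never consults $p$ is precisely why one operator serves all partitions.

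\emph{The operator.} Fix a $C$-sequence $\langle C_\delta\mid\delta<\lambda^+\rangle$ with $C_{\delta+1}=\{\delta\}$ and, for limit $\delta$, $C_\delta$ a club in $\delta$ with $\acc(C_\delta)\s C_\delta$ and $\otp(C_\delta)\le\lambda$; recall the walk $\Tr(\beta,\alpha)=\langle\beta=\beta_0>\beta_1>\cdots>\beta_n=\alpha\rangle$, where $\beta_{i+1}=\min(C_{\beta_i}\setminus\alpha)$, together with the coherence of walks: if $\alpha'\le\alpha<\beta$ and no $C_{\beta_i}$ $(i<n)$ meets $[\alpha',\alpha)$, then $\Tr(\beta,\alpha)$ is an initial segment of $\Tr(\beta,\alpha')$; consequently, for $\delta<\beta$ the walk from $\beta$ to any ordinal in a suitable final segment of $\delta$ visits $\delta$ exactly and then continues as a walk from $\delta$. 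Fix in addition a bijection $\lambda^+\leftrightarrow\lambda\times\lambda^+$ and a $\rho_1$-coherent system $\langle E_\delta\mid\delta<\lambda^+\rangle$ of bijections $E_\delta\colon\lambda\to\delta$ (for infinite $\delta$). Now set $c^+(\alpha,\beta):=E_{w(\alpha,\beta)}\bigl(c(\alpha,\beta)\bigr)$ (tupled with $c(\alpha,\beta)$ via the bijection $\lambda^+\leftrightarrow\lambda\times\lambda^+$, should one want $c^+$ to determine $c$), where $w(\alpha,\beta)<\lambda^+$ is a walk-address read off $\Tr(\beta,\alpha)$ and \emph{designed to be stable along walks}: whenever the walk from $\beta$ to $\alpha$ visits an ordinal $\delta$ of the appropriate kind, the address is $\delta$, or at worst an ordinal mutually coherent with $\delta$ relative to the $E$'s. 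Since $c$ is surjective (being a strong coloring) and $\operatorname{ran}(E_\delta)=\delta$ for every infinite $\delta$, the coloring $c^+$ attains all colors below $\lambda^+$.

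\emph{Verification of~(1).} Fix $p\colon[\lambda^+]^2\to\lambda$, a cardinal $\chi\le\cf(\lambda)$ with $c$ witnessing $\pr_1(\lambda^+,\lambda^+,\lambda,\chi)_p$, $\sigma<\chi$, a pairwise disjoint $\mathcal A\s[\lambda^+]^\sigma$ of size $\lambda^+$, and $\tau\colon\lambda\to\lambda^+$. Pick $M\prec(H(\theta),\in)$ for a sufficiently large regular $\theta$, of size $\lambda$, with $\lambda\cup\{\mathcal A,p,c,c^+,\tau,\langle C_\delta\rangle,\langle E_\delta\rangle\}\s M$ and $\cf(M\cap\lambda^+)=\cf(\lambda)$; put $\delta_M:=M\cap\lambda^+$, an ordinal of $\lambda^+$, of the kind required by the design of $w$, with $\operatorname{ran}(\tau)\s\delta<\delta_M$ for a suitable $\delta\in M$. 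Since $\min$ is injective on the pairwise disjoint $\mathcal A$, the set $\{\min(a)\mid a\in\mathcal A\}$ is cofinal in $\lambda^+$; fix $b^*\in\mathcal A$ with $\min(b^*)>\delta_M$. Running the $\sigma$-many walks from the members of $b^*$ down to $\delta_M$ and taking the supremum of $\sup(C_\gamma\cap\delta_M)$ over the ordinals $\gamma$ occurring on them yields an ordinal $\zeta^*<\delta_M$ (here $\sigma<\cf(\lambda)=\cf(\delta_M)$ keeps the supremum below $\delta_M$), such that the walk from any $\beta\in b^*$ down to any ordinal in $(\zeta^*,\delta_M)$ visits $\delta_M$. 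Pick $\zeta^{**}\in M$ with $\zeta^*<\zeta^{**}<\delta_M$; by elementarity $M$ contains a pairwise disjoint $\mathcal A_0\s\mathcal A$ of size $\lambda^+$ with $\min(a)>\zeta^{**}$ for every $a\in\mathcal A_0$, and each $a\in\mathcal A_0\cap M$ lies in $(\zeta^*,\delta_M)$. Thus for $a\in\mathcal A_0\cap M$, $\alpha\in a$ and $\beta\in b^*$, the walk $\Tr(\beta,\alpha)$ visits $\delta_M$, so $w(\alpha,\beta)=\delta_M$ (or coheres with it), whence $c^+(\alpha,\beta)=E_{\delta_M}\bigl(c(\alpha,\beta)\bigr)$ on any rectangle $a\times b^*$. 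Hence ``$c^+\restriction(a\times b^*)=\tau\circ p$'' is equivalent to ``$c\restriction(a\times b^*)=\tau'\circ p$'', where $\tau':=E_{\delta_M}^{-1}\circ\tau\colon\lambda\to\lambda$ is well-defined ($\operatorname{ran}(\tau)\s\delta_M=\operatorname{ran}(E_{\delta_M})$) and, decisively, is a function of the $p$-cell alone. It remains to produce $a\in\mathcal A_0\cap M$ with $a<b^*$ realizing $\tau'$ against $b^*$; this is where $\pr_1(\lambda^+,\lambda^+,\lambda,\chi)_p$ for $c$ is invoked, via a standard argument using the strongness of $c$ over $p$ together with the genericity of $b^*$ over $M$ and the rigidity of the walk from $b^*$. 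That last step is the heart of the argument.

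\emph{Clause~(2) and the main obstacle.} Clause~(2) follows the same scheme, threading the rectangular hypothesis $\pr_1(\lambda^+,\lambda^+\circledast\lambda^+,\lambda,\chi)_p$ for $c$: one pre-thins $\mathcal A$ and $\mathcal B$ independently of $\tau$ so that the address is controlled on the surviving rectangles, extracts $\mathcal A'\in[\mathcal A]^{\lambda^+}$ from $c$'s rectangular relation, and then for each given $\tau$ --- whose range is bounded, as always --- runs the submodel argument with the upper set drawn from $\mathcal B$ above the characteristic ordinal. The main obstacle, present in both clauses, is to design the walk-address $w$ (and the coherent system $\langle E_\delta\rangle$) so that it genuinely stabilizes on a subfamily of \emph{full} size $\lambda^+$ --- i.e.\ so that the maps $E_{w(\alpha,\beta)}$ get pinned down to, or forced to agree on the prescribed bounded set $\operatorname{ran}(\tau)$ with, a single map --- while the recipe for $c^+$ remains oblivious to $p$, keeping the induced $\lambda$-coloured target a function of the $p$-cell. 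This is exactly the point at which walks on ordinals enter the theory of strong colorings over partitions, and where the real work of the proof lies.
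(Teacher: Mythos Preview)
Your outline contains the right ingredients (walks, bounded range of $\tau$, threading through a distinguished ordinal), but it has a genuine gap at exactly the point you flag as ``the heart of the argument''. In your design, the address $w(\alpha,\beta)$ is read off the walk $\Tr(\beta,\alpha)$ alone and does not depend on $c(\alpha,\beta)$. Consequently, once you have pinned $w$ to the single value $\delta_M$, you are left needing to realize the target $\tau'=E_{\delta_M}^{-1}\circ\tau$ on some pair $(a,b^*)$ with $a$ drawn from $\mathcal A_0\cap M$ (a family of size at most $\lambda$) and $b^*$ a \emph{fixed} element outside $M$. The hypothesis $\pr_1(\lambda^+,\lambda^+,\lambda,\chi)_p$ gives you nothing of the sort: it promises $a,b$ from a $\lambda^+$-sized family, with no control over which $b$ you get, and your $\tau'$ depends on $\delta_M\notin M$, so elementarity does not rescue you either. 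The vague phrase ``of the appropriate kind'' hides a second problem: even if $\delta_M$ lies in your special set, nothing prevents the walk from $\beta\in b^*$ down to $\delta_M$ from meeting that set earlier, above $\delta_M$, where you have no control.

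The paper's key idea, which your scheme is missing, is to let the \emph{value of $c$} choose the stopping point in the walk. One splits $c(\alpha,\beta)$ via a bijection $\pi:\lambda\leftrightarrow\lambda\times\lambda$ into a pair $(i,\zeta)$; the coordinate $\zeta$ serves as a halting signal (e.g.\ ``stop at the first step $\delta$ of the walk with $\zeta\in Z_\delta$'', for a fixed almost-disjoint family $\langle Z_\delta\mid\delta<\lambda^+\rangle$), and the coordinate $i$ is fed to the surjection $e_\gamma$ attached to that stopping point. Because the $\pr_1$ hypothesis lets you prescribe $c(\alpha,\beta)$ as any function of the $p$-cell, you can prescribe $\zeta$ to be a value that forces the walk to stop precisely at your chosen $\delta$ for \emph{every} pair in a full $\lambda^+$-sized subfamily, and simultaneously prescribe $i$ to hit the right preimage under $e_\gamma$. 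This converts the problem back into an honest instance of $\pr_1(\lambda^+,\lambda^+,\lambda,\chi)_p$ for $c$ on a $\lambda^+$-sized family, which is exactly what the hypothesis delivers. (The paper actually proves the corollary by a three-way case split on $\lambda$ --- regular, singular of countable cofinality, singular of uncountable cofinality --- invoking separate theorems; the walk argument with the halting-signal trick handles the first and third cases, while the second uses Engelking--Karlowicz instead.)
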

\begin{proof} The proof is split into three cases:

$\br$ If $\lambda$ is regular then the stationary subset of ordinals of
  cofinality $\lambda$ below $\lambda^{+}$ is non-reflecting, so Theorem~\ref{thm65} below
  applies. 
  
$\br$ If $\lambda$ is singular of countable cofinality, then sets of cardinality
  below $\cf(\lambda)$ are finite, so Theorem~\ref{thm64} below applies. 
  
$\br$ If $\lambda$ is a  singular of uncountable cofinality, then appeal to Theorem~\ref{thm66} below.
\end{proof}

So how do one pump up a strong coloring
by $\lambda$  colors into one by $\lambda^+$ colors?
The classical stretching argument (cf.~\cite[p.~277]{TodActa}) employs a sequence of surjections
$\langle e_\beta:\lambda\rightarrow\beta+1\mid \beta<\lambda^+\rangle$.
Specially, defining $c^+(\alpha,\beta):=e_\beta(c(\alpha,\beta))$ stretches a
strong coloring $c:[\lambda^+]^2\rightarrow\lambda$ to a strong coloring $c^+:[\lambda^+]^2\rightarrow\lambda^+$ via a
pigeonhole consideration for stabilizing the stretch: for a prescribed color
$\delta<\lambda^+$ many $e_\beta$ will map the same $i<\lambda$ to $\delta$, and the
original coloring $c$ will indeed produce any possible $i<\lambda$.

The above one-dimensional stretching (that depends only on $\beta$) is incompatible with a two-dimensional
partition. What we do here, then, is instead of letting $c^+(\alpha,\beta):=e_\beta(c(\alpha,\beta))$,
we let $c^+(\alpha,\beta):=e_\gamma(i)$, where $i$ is again computed
from $c(\alpha,\beta)$, but $\gamma$ is computed from the \emph{triple} $(\alpha,\beta,c(\alpha,\beta))$.

Based on a feedback from the referee, we commence by illustrating the basic idea through a warm-up proof of the simplest pump
up theorem, though this is
covered by later theorems.

\begin{prop}\label{warmup} Let $\lambda$ be an infinite cardinal.

For every coloring $c:[\lambda^+]^2\rightarrow\lambda$,
there exists a corresponding coloring $c^+:[\lambda^+]^2\rightarrow\lambda^+$
such that for every $\mu\le\lambda$ and every $p:[\lambda^+]^2\rightarrow\mu$,
\begin{enumerate}
\item if $c$ witnesses $\lambda^+\nrightarrow_p[\lambda^+]^2_\lambda$
then $c^+$ witnesses $\lambda^+\nrightarrow_p[\lambda^+]^2_{\lambda^+}$;
\item if $c$ witnesses $\lambda^+\nrightarrow_p[\varkappa\circledast\lambda^+]^2_\lambda$
then $c^+$ witnesses $\lambda^+\nrightarrow_p[\varkappa\circledast \lambda^+]^2_{\lambda^+}$.
\end{enumerate}
\end{prop}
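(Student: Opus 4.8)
The plan is to implement the two-dimensional stretching hinted at in the introduction. Fix a sequence of surjections $\langle e_\gamma:\lambda\rightarrow\gamma+1\mid\gamma<\lambda^+\rangle$, and fix an auxiliary injection (or bijection) $\langle\, \cdot\,\rangle$ that encodes a triple $(\alpha,\beta,i)\in[\lambda^+]^2\times\lambda$ by an ordinal $\gamma_{\alpha,\beta,i}<\lambda^+$; concretely, for each $\beta<\lambda^+$ fix a bijection between $|\beta|\times\lambda$ and a subset of $\lambda^+$, so that from the pair $(\alpha,i)$ with $\alpha<\beta$ one reads off an ordinal $\gamma=\gamma_{\alpha,\beta,i}$, and arrange moreover that $\gamma\ge\beta$ (this is harmless since $|\beta|\cdot\lambda=\lambda$ for $\beta<\lambda^+$ when $\lambda$ is infinite; for $\beta$ finite one can just take $\gamma=\beta$ and lose nothing). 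Now define
\[
c^+(\alpha,\beta):=e_{\gamma_{\alpha,\beta,c(\alpha,\beta)}}\bigl(c(\alpha,\beta)\bigr).
\]
So $c^+$ reports the old color $i=c(\alpha,\beta)$ pushed forward through the surjection attached to the ordinal coded by the full triple $(\alpha,\beta,i)$. Since the coding is injective in $(\alpha,i)$ for fixed $\beta$, distinct pairs landing in the same old color cell are handled by distinct surjections, which is exactly what defeats a partition.

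Next I would verify the negative relation. Suppose toward a contradiction that $c^+$ fails $\lambda^+\nrightarrow_p[\lambda^+]^2_{\lambda^+}$, so there are $A\in[\lambda^+]^{\lambda^+}$ and $\tau:\mu\to\lambda^+$ with $c^+(\alpha,\beta)\neq\tau(p(\alpha,\beta))$ for all $(\alpha,\beta)\in[A]^2$. I want to extract from $\tau$ a bad function $\sigma:\mu\to\lambda$ for $c$ on some still-large subset, contradicting $\lambda^+\nrightarrow_p[\lambda^+]^2_\lambda$. The key step is a stabilization argument: shrink $A$ to $A'\in[\lambda^+]^{\lambda^+}$ so that, for each cell $\epsilon<\mu$, the behaviour of the surjections $e_{\gamma_{\alpha,\beta,i}}$ at the target value $\tau(\epsilon)$ becomes uniform. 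Precisely, enumerate $A=\{\alpha_\xi\mid\xi<\lambda^+\}$ increasingly; for each $\beta=\alpha_\xi$ and each $\alpha=\alpha_\eta$ with $\eta<\xi$ and each potential old color $i<\lambda$, the value $\tau(p(\alpha,\beta))$ is a fixed ordinal $<\lambda^+$, and because $\gamma_{\alpha,\beta,i}\ge\beta$, for all sufficiently large $\beta$ we have $\gamma_{\alpha,\beta,i}>\tau(\epsilon)$ for every $\epsilon$, hence $\tau(\epsilon)\in\gamma_{\alpha,\beta,i}+1=\dom(\text{range of }e_{\gamma_{\alpha,\beta,i}})$, so $e_{\gamma_{\alpha,\beta,i}}^{-1}(\tau(\epsilon))$ is nonempty. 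Using a $\Delta$-system / pigeonhole on these preimage sets (there are only $\le\mu\le\lambda<\lambda^+$ many cells, and the relevant data per pair lives in $[\lambda]^{\le\lambda}$, which after passing to a final segment and one more pigeonhole stabilizes), I would thin $A$ to $A'$ of size $\lambda^+$ on which, for each $\epsilon<\mu$, there is a fixed value $\sigma(\epsilon)<\lambda$ with the property that $c(\alpha,\beta)=\sigma(\epsilon)$ would force $e_{\gamma_{\alpha,\beta,\sigma(\epsilon)}}(\sigma(\epsilon))=\tau(\epsilon)$. Combined with $c^+(\alpha,\beta)\neq\tau(p(\alpha,\beta))$ on $[A']^2$, this yields $c(\alpha,\beta)\neq\sigma(p(\alpha,\beta))$ for all $(\alpha,\beta)\in[A']^2$, contradicting that $c$ witnesses $\lambda^+\nrightarrow_p[\lambda^+]^2_\lambda$.

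For part (2), the argument is the same with $A$ replaced by the rectangle $B_0\circledast B_1$ with $B_0\in[\lambda^+]^\varkappa$ and $B_1\in[\lambda^+]^{\lambda^+}$: the coding $\gamma_{\alpha,\beta,i}$ still exceeds $\beta$, and one performs the stabilization by thinning only $B_1$ (the $\lambda^+$-sized side) while keeping $B_0$ intact, since the bad data indexed by the finitely-much-smaller side $B_0$ can be absorbed — formally, for each of the $\le\varkappa\cdot\lambda$ many pairs $(\alpha,i)$ with $\alpha\in B_0$ one thins $B_1$ once, and the resulting $\varkappa\cdot\lambda\le\lambda^+$-fold intersection of final segments is still of size $\lambda^+$; then proceed as before to contradict $\lambda^+\nrightarrow_p[\varkappa\circledast\lambda^+]^2_\lambda$.

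The main obstacle I anticipate is the bookkeeping in the stabilization step: one must ensure that the pigeonhole that picks $\sigma(\epsilon)<\lambda$ from the (possibly large) family of preimage sets $e_{\gamma_{\alpha,\beta,i}}^{-1}(\tau(\epsilon))$ can be done on a single subset $A'$ of full size $\lambda^+$ and simultaneously for all cells $\epsilon<\mu$. This is where the hypothesis $\mu\le\lambda$ is used (there are few cells), together with the fact that $\lambda^+$ is regular so that finitely-many — indeed $\le\lambda$-many — thinnings down to final segments or to pigeonhole classes do not shrink $A$ below size $\lambda^+$. Making the dependence of $\gamma_{\alpha,\beta,i}$ on its arguments explicit and the $\ge\beta$ normalization are the two design choices that make the rest routine.
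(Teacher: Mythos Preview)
Your stabilization step contains a genuine gap, and it is more than bookkeeping. In your definition $c^+(\alpha,\beta)=e_{\gamma_{\alpha,\beta,c(\alpha,\beta)}}(c(\alpha,\beta))$, the old color $i=c(\alpha,\beta)$ plays \emph{two} roles simultaneously: it is the argument of the surjection and it participates (through your encoding) in selecting which surjection $e_{\gamma_{\alpha,\beta,i}}$ is applied. To obtain $c^+(\alpha,\beta)=\tau(\epsilon)$ you therefore need $i$ to satisfy the fixed-point condition $e_{\gamma_{\alpha,\beta,i}}(i)=\tau(\epsilon)$. Observing that the preimage $e_{\gamma_{\alpha,\beta,i}}^{-1}\{\tau(\epsilon)\}$ is nonempty does not produce such an $i$: it only yields some $i'$ with $e_{\gamma_{\alpha,\beta,i}}(i')=\tau(\epsilon)$, useless unless $i'=i$. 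Even granting that a solution $i_{\alpha,\beta,\epsilon}$ exists for each individual pair, the pigeonhole you invoke cannot homogenize the two-dimensional map $(\alpha,\beta)\mapsto(i_{\alpha,\beta,\epsilon})_{\epsilon<\mu}$ on a subset of size $\lambda^+$: that would amount to an instance of $\lambda^+\rightarrow(\lambda^+)^2_{\lambda^\mu}$, which fails. Thinning to final segments handles one-dimensional data indexed by $\beta$ alone, but your $\gamma_{\alpha,\beta,i}$ depends on $\alpha$ as well, so the problem stays irreducibly two-dimensional.

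The paper's construction avoids both issues by decoupling the two roles. One fixes a bijection $\pi:\lambda\leftrightarrow\lambda\times\lambda$, writes $(i,j):=\pi(c(\alpha,\beta))$, and sets $c^+(\alpha,\beta):=e_{e_\beta(j)}(i)$. Here the intermediate ordinal $\gamma=e_\beta(j)$ is computed from $\beta$ and the coordinate $j$ \emph{only} --- not from $\alpha$ --- while $i$ remains a free coordinate available to hit the target. A double one-dimensional pigeonhole then suffices: first find $j<\lambda$ such that $\{\gamma<\lambda^+: \sup\{\beta\in B:e_\beta(j)=\gamma\}=\lambda^+\}$ is cofinal in $\lambda^+$; then pick such a $\gamma$ above $\sup(\im(\tau))$ and restrict to the cofinal set $B_\gamma=\{\beta\in B:e_\beta(j)=\gamma\}$. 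On $B_\gamma$ the ordinal $\gamma$ is fixed, so defining $\tau^*(\epsilon):=\pi^{-1}(e_\gamma^{-1}(\tau(\epsilon)),j)$ and applying the hypothesis on $c$ to $B_\gamma$ yields $(\alpha,\beta)\in[B_\gamma]^2$ with $c(\alpha,\beta)=\tau^*(p(\alpha,\beta))$, which unwinds to $c^+(\alpha,\beta)=\tau(p(\alpha,\beta))$. The essential design choice is the split of the old color into two independent coordinates together with the independence of $\gamma$ from $\alpha$; this converts what in your approach is a (false) two-dimensional Ramsey requirement into two legitimate one-dimensional pigeonholes.
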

\begin{proof}
  Fix a bijection $\pi:\lambda\rightarrow\lambda\times\lambda$. For every $\beta<\lambda^+$, fix a surjection $e_\beta:\lambda\rightarrow\beta+1$
  and let $e_{\gamma}^{{-1}}$ be a right inverse of $e_{\gamma}$, that is, satisfy that
  $e_{\gamma}(e_{\gamma}^{{-1}}(\delta))=\delta$ for all $\delta\le \gamma$.

  Now, given $c:[\lambda^+]^2\rightarrow\lambda$, define
  $c^+:[\lambda^+]^2\rightarrow\lambda^+$, as follows. For $\alpha<\beta<\lambda^+$, let $(i,j):=\pi(c(\alpha,\beta))$ and then
  let:
$$c^+(\alpha,\beta):=e_{e_{\beta}(j)}(i).$$

Thus, rather than apply the bijection $e_{\beta}$ to $c(\alpha,\beta)$ itself, we split
$c(\alpha,\beta)$ to two terms via $\pi$ and apply $e_{\beta}$ to one of them, to find some
$\gamma$ whose $e_{\gamma}$ is applied to the other. This allows the choice of
the stretching to be done by the coloring $c$, and enables the coloring $c^{+}$ to
overcome every partition $p$ which $c$ overcomes.

\begin{claim} For every cofinal $B\s \lambda^+$ there exists $j<\lambda$ such that
$$\sup\left\{\gamma<\lambda^+\mid \sup\{\beta\in B\mid e_\beta(j)=\gamma\}=\lambda^+\right\}=\lambda^+.$$
\end{claim}
\begin{proof} Let $B\s \lambda^+$ be cofinal. For every $\gamma<\lambda^+$ and
  $\beta\ge\gamma$ in $B$ there is some $j_{{\gamma,\beta}}<\lambda$ such that $e_{\beta}(j_{{\gamma,\beta}})=\gamma$.
  As $\lambda^+$ is regular, there is some $j_{\gamma}$ and $B_{\gamma}\s B$ with
  $\sup(B_{\gamma})=\lambda^+$ such that $e_{\beta}(i_{\gamma})=\gamma$ for all $\beta\in B_{\gamma}$. Finally, by
  regularity of $\lambda^+$, there is some $j<\lambda$ such that $j=j_{\gamma}$ for an
  unbounded set of $\gamma<\lambda^+$, as required.
\end{proof}

We shall only verify Clause~(1), and encourage the reader to see they know how to adapt the verification to the context of Clause~(2).

Suppose that $p:[\lambda^+]^2\rightarrow\mu$ with $\mu\le\lambda$ is some partition for which
$c$ witnesses $\lambda^+\nrightarrow_p[\lambda^+]_{\lambda}$. To see that $c^{+}$ witnesses
$\lambda^+\nrightarrow_p[\lambda^+]^2_{\lambda^+}$ suppose that $B\s \lambda^+$ is cofinal and that some
function $\tau:\mu\to\lambda^+$ is given. We need to find a pair $\alpha<\beta$ of ordinal in $B$ such that
$c^{+}(\alpha,\beta)=\tau(p(\alpha,\beta))$.

Let $j<\lambda$ be given by the claim with respect to $B$.
For every $\gamma<\lambda^+$, denote $B_\gamma:=\{\beta\in B\mid e_\beta(j)=\gamma\}$.
By the choice of $j$, the set $\Gamma=\{\gamma<\lambda^+\mid \sup(B_{\gamma})=\lambda^+\}$ is cofinal in $\lambda^+$.
So, by the regularity of $\lambda^+$, we may fix some $\gamma\in \Gamma$ above $\sup (\im(\tau))$.

        Define a function
        $\tau^{*}:\mu \rightarrow\lambda$ as follows. For every $\epsilon<\mu$ let
          $$\tau^{*}(\epsilon)=\pi^{-1}(e_{\gamma}^{{-1}}(\tau(\epsilon)),j).$$

    As $c$ witnesses $\lambda^+\nrightarrow_p[\lambda^+]^{2}_{\lambda}$ and $B_{\gamma}$ is in particular a cofinal subset of $\lambda^+$,
    we may fix $(\alpha,\beta)\in [B_{\gamma}]^2$ such that $c(\alpha,\beta)=\tau^{*}(p(\alpha,\beta))$. Denote
    $(i',j'):=\pi(c(\alpha,\beta))$ and $\epsilon:=p(\alpha,\beta)$.  By the definition of $\tau^*$, necessarily $j'=j$ and $i'=e_{\gamma}^{{-1}}(\tau(\epsilon))$.
    In particular, $e_\beta(j')=\gamma$.
    By the definition of $c^{+}$, then, 
$$c^{+}(\alpha,\beta)=e_{e_{\beta}(j')}(i')=e_{\gamma}(e_{\gamma}^{-1}(\tau(\epsilon)))=\tau(\epsilon)=\tau(p(\alpha,\beta)),$$ as required.
\end{proof}

To motivate the statement of our next theorem, notice that if
$\lambda^+\nrightarrow_p[\lambda^+]^2_{\lambda^+}$ holds, then so do
$\lambda^+\nrightarrow_p[\lambda^+]^2_{\lambda}$ and
$\lambda^+\nrightarrow[\lambda^+]^2_{\lambda^+}$.  
The theorem
shows that it is possible to combine these two  consequences --- a
witness for $\lambda$ many colors over a partition with a witness for
$\lambda^+$ many colors but not over a partition --- into a single
strong coloring. Note that in the next theorem there is no restriction on the value of $\chi$.

\begin{thm}\label{thm62} Suppose $\nu,\mu\le\lambda$ are cardinals with $\lambda$ infinite, and:
\begin{itemize}
\item $p:[\lambda^+]^2\rightarrow\mu$ is a partition;
\item $\nu=1$ or $\nu=\lambda$. More generally,
  $\cf([\lambda]^{\nu},{\s})\le \lambda$ suffices.
\end{itemize}

If $\pro{\lambda^+}{\lambda}{\nu}{\lambda}{\chi}_p$ and $\pr_1(\lambda^+,\lambda\circledast\lambda^+,\lambda^+,\chi)$ both hold,
then so does $\pro{\lambda^+}{\lambda}{\nu}{\lambda^+}{\chi}_p$.
\end{thm}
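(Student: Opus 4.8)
The plan is to stretch a coloring $c:[\lambda^+]^2\to\lambda$ that simultaneously witnesses $\pro{\lambda^+}{\lambda}{\nu}{\lambda}{\chi}_p$ and (via the non-partitioned ingredient $\pr_1(\lambda^+,\lambda\circledast\lambda^+,\lambda^+,\chi)$, through a second coloring $d:[\lambda^+]^2\to\lambda^+$) has access to $\lambda^+$ colors on suitable rectangles. Concretely, I would fix a bijection $\pi:\lambda\leftrightarrow\lambda\times\lambda$ to split $c(\alpha,\beta)$ into a pair $(i,j)$, fix the surjections $\langle e_\beta:\lambda\to\beta+1\mid\beta<\lambda^+\rangle$ with right inverses $e_\gamma^{-1}$ as in Proposition~\ref{warmup}, and define $c^+(\alpha,\beta):=e_{\gamma}(i)$ where $\gamma$ is not simply $e_\beta(j)$ but rather is read off from the second coloring, say $\gamma:=d(\alpha',\beta)$ for an appropriate $\alpha'$, or more robustly $\gamma$ is computed from $d$ applied to the pair whose $j$-coordinate is encoded by $c$. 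The point of routing the stretch-ordinal through $d$ rather than through $e_\beta(j)$ alone is exactly the one made in the text's discussion: it lets the coloring $c$ choose the stretch in a way that survives the two-dimensionality of $p$, while $d$ guarantees the stretch-ordinal ranges cofinally and stably in $\lambda^+$.

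The argument then proceeds in the following steps. First, I would prove the analogue of the Claim in Proposition~\ref{warmup}, now using $d$ and $\pr_1(\lambda^+,\lambda\circledast\lambda^+,\lambda^+,\chi)$: given the pairwise disjoint families $\mathcal A$ of size $\lambda$ and $\mathcal B$ of size $\lambda^+$, and the target matrix or function $\tau:\mu\to\lambda^+$, find a cofinal set of candidate stretch-ordinals $\gamma$ together with a thinned subfamily $\mathcal B_\gamma\s\mathcal B$ on which $d$ behaves uniformly enough — i.e., for which there are $a\in\mathcal A$, $b\in\mathcal B_\gamma$ with $d$ hitting the prescribed value $\gamma$ across $a\times b$, or at least on the relevant coordinate. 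Since $\nu$ might be larger than $1$, the hypothesis $\cf([\lambda]^\nu,\s)\le\lambda$ is used to choose, from $\mathcal A$, a single $\mathcal A'\in[\mathcal A]^\nu$ that works against all of the at most $\lambda$ ''reduced'' targets simultaneously; this is where that cardinal-arithmetic side condition earns its keep. Second, given $\mathcal A'$, define the reduced target $\tau^*:\mu\to\lambda$ by $\tau^*(\epsilon):=\pi^{-1}(e_\gamma^{-1}(\tau(\epsilon)),j)$ exactly as in the warm-up (in the $\pr_0$ version, $\tau^*$ becomes a matrix $(\tau^*_{i,j})$), pick $\gamma$ above $\sup(\im(\tau))$ from the cofinal set produced in step one, and apply $\pro{\lambda^+}{\lambda}{\nu}{\lambda}{\chi}_p$ to $\mathcal A'$ and $\mathcal B_\gamma$ against $\tau^*$. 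Third, unwind the definitions: the pair $(a,b)$ returned by $c$ has $\pi(c(a(i),b(j')))=(e_\gamma^{-1}(\tau(p(\ldots))),j)$ on the right coordinates, and because $b\in\mathcal B_\gamma$ the stretch-ordinal computed from $d$ is indeed $\gamma$, so $c^+(a(i),b(j'))=e_\gamma(e_\gamma^{-1}(\tau(\cdots)))=\tau(p(\cdots))$ as required.

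The main obstacle I anticipate is the interface between the two colorings: the pair $(a,b)$ is chosen by the $c$-relation $\pro{\lambda^+}{\lambda}{\nu}{\lambda}{\chi}_p$ from a subfamily $\mathcal B_\gamma$ that was pre-selected using the $d$-relation, so I must be careful that the $d$-value used to recover $\gamma$ at the pair $(a(i),b(j'))$ genuinely equals $\gamma$ and does not depend on which element of $a$ or $b$ is involved in a way that conflicts with what $c$ controls. In the warm-up this was trivial because the stretch depended only on $\beta$ via $e_\beta(j)$; here, routing through a genuine two-variable coloring $d$ forces the stretch-ordinal to be determined by a pair, which is why one wants the rectangular hypothesis $\pr_1(\lambda^+,\lambda\circledast\lambda^+,\lambda^+,\chi)$ rather than merely $\lambda^+\nrightarrow[\lambda^+]^2_{\lambda^+}$: the ''left side of size $\lambda$'' feature lets us first stabilize $d$ on a small set, absorb that into the choice of $\mathcal A'$ (using $\cf([\lambda]^\nu,\s)\le\lambda$), and only then invoke the partitioned relation. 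Organizing the quantifiers so that $\mathcal A'$ is chosen once and works for every $\tau$ — matching the exact shape of Definition~\ref{prelations} — is the delicate bookkeeping, but no new idea beyond the warm-up's ''let $c$ choose the stretch'' trick should be needed.
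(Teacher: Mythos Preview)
Your architecture is the paper's, but the one step you flag as the ``main obstacle'' is exactly where your sketch is too vague, and the tentative formulation you offer (making $d$ constant on $a\times b$) would not work. The paper's definition is $c^+(\alpha,\beta):=e_{d(j,\beta)}(i)$ where $(i,j):=\pi(c(\alpha,\beta))$: the first argument of $d$ is the ordinal $j<\lambda$ extracted from the $c$-color, \emph{not} an element of $a$. This dissolves your anticipated interface problem: ensuring $d(j,\beta)=\gamma$ for every $\beta\in b$ is a condition on $b$ alone, so one may pre-thin $\mathcal B$ to $\mathcal B_j^\gamma:=\{b\in\mathcal B\mid \min(b)\ge\lambda,\ d[\{j\}\times b]=\{\gamma\}\}$ \emph{before} invoking the $c$-relation. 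The claim, proved by applying the $d$-hypothesis with left family $[\lambda]^1$ (not $\mathcal A$), produces a single $j<\lambda$ for which $\{\gamma<\lambda^+\mid|\mathcal B_j^\gamma|=\lambda^+\}$ is cofinal. If instead you tried to force $d[a\times b]=\{\gamma\}$, the pair $(a,b)$ would have to be chosen by the $d$-relation, leaving you no control over $c$ on that same pair; the two relations cannot be invoked on the same rectangle.

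Your account of where $\cf([\lambda]^\nu,{\subseteq})\le\lambda$ enters is also slightly off. It is not used to cover ``at most $\lambda$ reduced targets''. Rather: after fixing $j$, for each good $\gamma$ the $c$-relation (applied to $\mathcal A$ and $\mathcal B_j^\gamma$) returns some $\mathcal A^\gamma\in[\mathcal A]^\nu$. There are $\lambda^+$ many such $\gamma$, while a cofinal family in $([\mathcal A]^\nu,{\subseteq})$ has size $\le\lambda$; pigeonhole gives one $\mathcal A'\in[\mathcal A]^\nu$ with $\mathcal A^\gamma\subseteq\mathcal A'$ for a cofinal set $\Gamma$ of $\gamma$'s. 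This single $\mathcal A'$ then handles every $\tau:\mu\to\lambda^+$: pick $\gamma\in\Gamma$ above $\sup(\im(\tau))$, set $\tau'(\epsilon):=\pi^{-1}(e_\gamma^{-1}(\tau(\epsilon)),j)$, and apply the $c$-relation on $\mathcal A^\gamma\subseteq\mathcal A'$ against $\mathcal B_j^\gamma$.
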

\begin{proof} 
Fix a coloring $c:[\lambda^+]^2\rightarrow\lambda$ which witnesses
$\pro{\lambda^+}{\lambda}{\nu}{\lambda}{\chi}_p$ and a coloring
$d:[\lambda^+]^2\rightarrow\lambda^+$ which witnesses
$\pr_1(\lambda^+,\lambda\circledast\lambda^+,\lambda^+,\chi)$.  For
every $\beta<\lambda^+$ fix a surjection
$e_\beta:\lambda\rightarrow\beta+1$.  Fix a bijection
$\pi:\lambda\leftrightarrow\lambda\times\lambda$.  Define a coloring
$c^+:[\lambda^+]^2\rightarrow\lambda^+$, as follows:  For all
$\alpha<\beta<\lambda$, let $c^+(\alpha,\beta):=0$; for
$\alpha<\beta<\lambda^+$ with $\beta\ge\lambda$ denote
$(i,j):=\pi(c(\alpha,\beta))$ and  let:
$$c^+(\alpha,\beta):=e_{d(j,\beta)}(i).$$

To verify that $c^+$ witnesses $\pro{\lambda^+}{\lambda}{\nu}{\lambda^+}{\chi}_p$
fix pairwise disjoint subfamilies $\mathcal A,\mathcal B\s[\lambda^+]^{<\chi}$
with $|\mathcal A|=\lambda$ and $|\mathcal B|=\lambda^+$.
Denote $\mathcal B_j^\gamma:=\{b\in\mathcal B\mid \min(b)\ge\lambda\ \&\ d[\{j\}\times b]=\{\gamma\} \}$.
\begin{claim} 
There exists $j<\lambda$ for which
$\{\gamma<\lambda^+\mid |\mathcal B_j^\gamma|=\lambda^+\}$ is cofinal in $\lambda^+$.
\end{claim}
\begin{proof} Suppose not. Then, for every $j<\lambda$,
$\delta_j:=\sup\{\gamma<\lambda^+\mid |\mathcal B_j^\gamma|=\lambda^+\}$ is $<\lambda^+$.
Consider $\delta:=(\sup_{j<\lambda}\delta_j)+1$.
Then, for every $j<\lambda$, $|\mathcal B_j^\delta|<\lambda^+$.
Consequently, $\mathcal B':=\{ b\in\mathcal B\mid \min(b)\ge\lambda\ \&\ \forall j<\lambda\,(d[\{j\}\times b]\neq \{\delta\})\}$ has size $\lambda^+$.
Appealing to $d$ with $\mathcal A':=[\lambda]^1$ and $\mathcal B'$,
there must exist $a\in\mathcal A'$ and $b\in\mathcal B'$ such that $d[a\times b]=\{\delta\}$.
But $a=\{j\}$ for some $j<\lambda$, contradicting the fact that $b\in\mathcal B'$.
\end{proof}

Let $j<\lambda$ be given by the claim.  By the choice of $c$, for
every $\gamma<\lambda^+$ such that $|\mathcal B_j^\gamma|=\lambda^+$
there exists $\mathcal A^\gamma\in[\mathcal A]^\nu$ such that for
every function $\tau:\mu\rightarrow\lambda$, there are $a\in\mathcal A^\gamma$ 
and $b\in\mathcal B_j^\gamma$ with $a<b$ such that
$c(\alpha,\beta)=\tau(p(\alpha,\beta))$ for all 
$(\alpha,\beta)\in a\times b$.  
As $\{\gamma<\lambda^+\mid |\mathcal B_j^\gamma|=\lambda^+\}$ is cofinal in $\lambda^+$ 
and $\cf([|\mathcal A|]^\nu,{\s})<\lambda^+$, we may find some $\mathcal A'\in[\mathcal A]^\nu$ 
for which $\Gamma:=\{\gamma<\lambda^+\mid |\mathcal B_j^\gamma|=\lambda^+\ \&\ \mathcal A^\gamma\s\mathcal A'\}$ is
cofinal in $\lambda^+$.  We claim that $\mathcal A'$ is as sought.

\begin{claim} Let $\tau:\mu\rightarrow\lambda^+$.
There are $a\in\mathcal A'$ and $b\in\mathcal B$ with $a<b$ such that $c(\alpha,\beta)=\tau(p(\alpha,\beta))$ for all $(\alpha,\beta)\in a\times b$.
\end{claim}
\begin{proof}
As $\mu\le\lambda$, we may fix a large enough $\gamma\in\Gamma$ such that $\im(\tau)\s\gamma$.
For every $\epsilon<\mu$, fix $i_\epsilon<\lambda$ such that $e_\gamma(i_\epsilon)=\tau(\epsilon)$.
Define a function $\tau':\mu\rightarrow\lambda$ via $\tau'(\epsilon):=\pi^{-1}(i_\epsilon,j)$.
Pick $a\in\mathcal A^\gamma$ and $b\in\mathcal B_j^\gamma$ with $a<b$ such that $c(\alpha,\beta)=\tau'(p(\alpha,\beta))$ for all $(\alpha,\beta)\in a\times b$.
Clearly, $a\in\mathcal A'$ and $b\in\mathcal B$.
Set $\epsilon:=p(\alpha,\beta)$.
Then $c(\alpha,\beta)=\tau'(\epsilon)=\pi^{-1}(i_\epsilon,j)$,
so that
$c^+(\alpha,\beta)=e_{d(j,\beta)}(i_\epsilon)=e_{\gamma}(i_\epsilon)=\tau(\epsilon)$.
\end{proof}
This completes the proof.
\end{proof}

Another approach for stretching strong colorings for high-dimensional relations, 
is to try to encode sequences of ordinals in a single value. In the next theorem, this is done by appealing
to the Engelking-Karlowicz theorem, which arranges $\lambda^+$ many patterns in a
matrix with just $\lambda$ rows.

\begin{thm}\label{thm64}
Let $\lambda$ be an infinite cardinal.

For every coloring $c:[\lambda^+]^2\rightarrow\lambda$
there exists a corresponding coloring $c^+:[\lambda^+]^2\rightarrow\lambda^+$
such that for every partition $p:[\lambda^+]^2\rightarrow\mu$ with $\mu\le\lambda$
and every cardinal $\chi$ such that $\lambda^{<\chi}=\lambda$:
\begin{enumerate}
\item if $c$ witnesses $\pr_1(\lambda^+,\lambda^+,\lambda,\chi)_p$
then $c^+$ witnesses $\pr_1(\lambda^+,\lambda^+,\lambda^+,\chi)_p$;
\item if $c$ witnesses $\pr_1(\lambda^+,\lambda^+\circledast\lambda^+,\lambda,\chi)_p$
then $c^+$ witnesses $\pr_1(\lambda^+,\lambda^+\circledast\lambda^+,\allowbreak\lambda^+,\chi)_p$;
\item if $c$ witnesses $\pro{\lambda^+}{\lambda}{\nu}{\lambda}{\chi}_p$ with $\cf([\lambda]^\nu,{\s})\le\lambda$,
then $c^+$ witnesses $\pro{\lambda^+}{\lambda}{\nu}{\lambda^+}{\chi}_p$.
\end{enumerate}
\end{thm}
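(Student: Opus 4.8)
The plan is to run the mechanism of Theorem~\ref{thm62}, but with the auxiliary strong coloring $d$ replaced by a matrix of functions supplied by the Engelking--Karlowicz theorem. Fix a bijection $\pi:\lambda\leftrightarrow\lambda\times\lambda$ and, for each $\beta<\lambda^+$, a surjection $e_\beta:\lambda\rightarrow\beta+1$ together with a right inverse $e_\beta^{-1}$ (so $e_\beta(e_\beta^{-1}(\delta))=\delta$ whenever $\delta\le\beta$). Since $\lambda^{<\chi}=\lambda$ for the relevant $\chi$ and $\lambda^+\le 2^\lambda$, the Engelking--Karlowicz theorem provides a sequence $\langle h_\eta:\lambda^+\rightarrow\lambda\mid\eta<\lambda\rangle$ such that every partial function from $\lambda^+$ to $\lambda$ whose domain has size ${<}\chi$ is extended by some $h_\eta$. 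Given $c:[\lambda^+]^2\rightarrow\lambda$, I would define $c^+:[\lambda^+]^2\rightarrow\lambda^+$ by letting, for $\alpha<\beta$, $(i,j):=\pi(c(\alpha,\beta))$ and $c^+(\alpha,\beta):=e_{e_\beta(h_j(\beta))}(i)$. In the language of the discussion preceding Proposition~\ref{warmup}, the stretching ordinal $\gamma:=e_\beta(h_j(\beta))$ is computed from the triple $(\alpha,\beta,c(\alpha,\beta))$: the coloring $c$ picks a row $j<\lambda$, the value $h_j(\beta)<\lambda$ is read off, and $e_\beta$ pushes it forward to an ordinal ${\le}\beta$ which is then the stretch applied to the remaining coordinate $i$.

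The combinatorial heart is a pigeonhole claim in the spirit of those in Proposition~\ref{warmup} and Theorem~\ref{thm62}. For a pairwise disjoint family $\mathcal B\s[\lambda^+]^\sigma$ with $|\mathcal B|=\lambda^+$ and $\sigma<\chi$, and for $j<\lambda$ and $\gamma<\lambda^+$, set $\mathcal B^\gamma_j:=\{b\in\mathcal B\mid \gamma<\min(b)\text{ and }h_j(\beta)=e_\beta^{-1}(\gamma)\text{ for all }\beta\in b\}$. The claim is that there is $j<\lambda$ for which $\{\gamma<\lambda^+\mid |\mathcal B^\gamma_j|=\lambda^+\}$ is cofinal in $\lambda^+$. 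To see this, first observe that $b\mapsto\min(b)$ is injective on $\mathcal B$, so $\{b\in\mathcal B\mid \gamma<\min(b)\}$ has size $\lambda^+$ for every $\gamma<\lambda^+$. For such a $b$, the map $\beta\mapsto e_\beta^{-1}(\gamma)$ is a partial function from $\lambda^+$ to $\lambda$ with domain $b$ of size $\sigma<\chi$, hence it is extended by $h_{\eta(b,\gamma)}$ for some $\eta(b,\gamma)<\lambda$, which says exactly that $b\in\mathcal B^\gamma_{\eta(b,\gamma)}$. Now pigeonhole twice: for each $\gamma<\lambda^+$ fix, by regularity of $\lambda^+$, some $j_\gamma<\lambda$ with $|\mathcal B^\gamma_{j_\gamma}|=\lambda^+$; then fix $j<\lambda$ with $j=j_\gamma$ for $\lambda^+$-many, hence cofinally many, $\gamma<\lambda^+$.

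Each clause is then obtained by combining this claim with the hypothesis on $c$, along the lines of Theorem~\ref{thm62}. For clause~(1), given a pairwise disjoint $\mathcal A\s[\lambda^+]^\sigma$ of size $\lambda^+$ and $\tau:\mu\rightarrow\lambda^+$, apply the claim with $\mathcal B:=\mathcal A$ to obtain $j$, fix $\gamma$ in the resulting cofinal set with $\im(\tau)\s\gamma$, and note $|\mathcal A^\gamma_j|=\lambda^+$; feeding $\mathcal A^\gamma_j$ and the target $\tau'(\epsilon):=\pi^{-1}(e_\gamma^{-1}(\tau(\epsilon)),j)$ into $\pr_1(\lambda^+,\lambda^+,\lambda,\chi)_p$ for $c$ yields $a<b$ in $\mathcal A^\gamma_j\s\mathcal A$ with $c(\alpha,\beta)=\tau'(p(\alpha,\beta))$ on $a\times b$, and unwinding the definition of $c^+$ --- using $b\in\mathcal A^\gamma_j$, so that $e_\beta(h_j(\beta))=\gamma$ for $\beta\in b$, together with the choice of $\tau'$ --- gives $c^+(\alpha,\beta)=\tau(p(\alpha,\beta))$ throughout $a\times b$. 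Clause~(2) is the balanced case: apply the claim to the right-hand family $\mathcal B$, invoke $\pr_1(\lambda^+,\lambda^+\circledast\lambda^+,\lambda,\chi)_p$ for $c$ with $\mathcal A$ and $\mathcal B^\gamma_j$ to produce $\mathcal A^\gamma\in[\mathcal A]^{\lambda^+}$ for each good $\gamma$, and then take $\mathcal A':=\mathcal A$ --- permissible since each $\mathcal A^\gamma\s\mathcal A$ and one is free to select the good $\gamma$ (and hence $\mathcal A^\gamma$) only after $\tau$ is presented. Clause~(3) is the genuinely unbalanced case, and this is where $\cf([\lambda]^\nu,{\s})\le\lambda$ is used: after stabilizing on $j$ and extracting $\mathcal A^\gamma\in[\mathcal A]^\nu$ for each good $\gamma$ from $\pro{\lambda^+}{\lambda}{\nu}{\lambda}{\chi}_p$ for $c$, a ${\s}$-cofinal subfamily of $[\mathcal A]^\nu$ of size ${\le}\lambda$ absorbs all the $\mathcal A^\gamma$, so by pigeonhole over the cofinally many good $\gamma$ there is a single $\mathcal A'\in[\mathcal A]^\nu$ with $\mathcal A^\gamma\s\mathcal A'$ for cofinally many such $\gamma$, and this $\mathcal A'$ is as required.

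I expect the main obstacle to be the scheduling of the quantifiers: one must pin down the Engelking--Karlowicz row $j$ uniformly, via the claim, \emph{before} appealing to the strong-coloring hypothesis on $c$, so that the down-scaled target $\tau'$ can be presented to $c$ on a subfamily ($\mathcal A^\gamma_j$, resp.\ $\mathcal B^\gamma_j$) that is still of the full size $\lambda^+$; forcing $e_\beta(h_j(\beta))$ to equal a prescribed ordinal simultaneously at every point of a ${<}\chi$-sized block $b$ is exactly where $\sigma<\chi$ and $\lambda^{<\chi}=\lambda$ enter. The remaining bookkeeping --- in particular the stabilization of the subfamilies $\mathcal A^\gamma$ in clause~(3) via $\cf([\lambda]^\nu,{\s})\le\lambda$ --- is routine and parallels Theorem~\ref{thm62}.
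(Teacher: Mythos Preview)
Your proposal is correct and follows the paper's approach almost verbatim: the coloring $c^+(\alpha,\beta):=e_{e_\beta(h_j(\beta))}(i)$ is exactly the paper's $c^+(\alpha,\beta):=e_{d(j,\beta)}(i)$ with $d(j,\beta):=e_\beta(h_j(\beta))$, your pigeonhole claim is the paper's claim (which the paper leaves as ``clear''), and your verification of the three clauses parallels that of Theorem~\ref{thm62} just as the paper indicates. One small point of phrasing: you introduce the Engelking--Karlowicz sequence with reference to a particular $\chi$, but since the theorem demands a single $c^+$ working for \emph{every} $\chi$ with $\lambda^{<\chi}=\lambda$, you should fix the sequence once with the $\chi$-free property ``every partial function $h:a\to\lambda$ with $\lambda^{|a|}=\lambda$ extends to some $h_j$'' (as the paper does) --- this is what Engelking--Karlowicz actually gives, and it covers all relevant $\chi$ simultaneously.
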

\begin{proof}
Using the Engelking-Karlowicz theorem, fix a sequence
$\langle h_j\mid j<\lambda\rangle$
of functions from $\lambda^+$ to $\lambda$ with the property that  for every
$a\s\lambda^+$ with $\lambda^{|a|}=\lambda$ and a
function $h:a\rightarrow\lambda$, there exists $j<\lambda$
with $h\s h_j$. Define a function $d:\lambda\times\lambda^+\rightarrow\lambda^+$ via 
$$d(j,\beta):=e_\beta(h_j(\beta)).$$

For each $\mathcal B\s\mathcal P(\lambda^+)$, denote $\mathcal B_j^\gamma:=\{b\in\mathcal B\mid \min(b)\ge\lambda\ \&\ d[\{j\}\times b]=\{\gamma\} \}$.
The following is clear.
\begin{claim} Assuming $\lambda^{<\chi}=\lambda$, for every $\mathcal B\s[\lambda^+]^{<\chi}$ of size $\lambda^+$,
there exists $j<\lambda$ for which
$\{\gamma<\lambda^+\mid |\mathcal B_j^\gamma|=\lambda^+\}$ is cofinal in $\lambda^+$.\qed
\end{claim}

The rest of the proof is now very similar to that of Theorem~\ref{thm62}.
We fix a bijection $\pi:\lambda\leftrightarrow\lambda\times\lambda$
and, for every $\beta<\lambda^+$, we fix a surjection $e_\beta:\lambda\rightarrow\beta+1$.
Given a coloring $c:[\lambda^+]^2\rightarrow\lambda$, we define the corresponding
coloring $c^+:[\lambda^+]^2\rightarrow\lambda^+$ 
by letting $c^+(\alpha,\beta):=0$ for all $\alpha<\beta<\lambda$
and, given $\alpha<\beta<\lambda^+$ with $\beta\ge\lambda$,
we denote $(i,j):=\pi(c(\alpha,\beta))$ and  let:
$$c^+(\alpha,\beta):=e_{d(j,\beta)}(i).$$ 

The verification of the three clauses of this theorem is now similar to the verification in the proof of Theorem~\ref{thm62}.
\end{proof}

The proofs of the next theorem and the one following it employ walks on ordinals
in order to pick the $\gamma$ in the template formula ``$c^+(\alpha,\beta):=e_\gamma(i)$''.

\begin{thm}\label{thm65}   Let $\lambda$ be an infinite cardinal.

Suppose that $\chi\le\cf(\lambda)$ and that $E^{\lambda^+}_{\ge\chi}$ admits a non-reflecting stationary set.
For every coloring $c:[\lambda^+]^2\rightarrow\lambda$
there exists a corresponding coloring $c^+:[\lambda^+]^2\rightarrow\lambda^+$
which satisfies that for every partition $p:[\lambda^+]^2\rightarrow\mu$ with $\mu\le\lambda$:
\begin{enumerate}
\item if $c$ witnesses $\pr_1(\lambda^+,\lambda^+,\lambda,\chi)_p$
then $c^+$ witnesses $\pr_1(\lambda^+,\lambda^+,\lambda^+,\chi)_p$;
\item if $c$ witnesses $\pr_1(\lambda^+,\lambda^+\circledast\lambda^+,\lambda,\chi)_p$
then $c^+$ witnesses $\pr_1(\lambda^+,{\lambda^+\circledast\lambda^+},\allowbreak\lambda^+,\chi)_p$.
\end{enumerate}
\end{thm}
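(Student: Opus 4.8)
The plan is to follow the template $c^+(\alpha,\beta):=e_\gamma(i)$ from the discussion preceding the warm-up, but now to compute the stabilizing ordinal $\gamma$ using walks on ordinals along a $C$-sequence built from the non-reflecting stationary set. Concretely, fix a non-reflecting stationary $S\s E^{\lambda^+}_{\ge\chi}$ and a $C$-sequence $\langle C_\alpha\mid\alpha<\lambda^+\rangle$ such that $\acc(C_\alpha)\cap S=\emptyset$ for all $\alpha$ (so that every walk avoids limit points in $S$); associated to it are the usual walk functions, in particular $\Tr$ (the trace) and the last-step function $\rho_2$ or the step-counting function, and, crucially for feeding an index into $e_\gamma$, a parameter drawn from the walk such as $\rho_1(j,\beta)$ or $\lambda_2(j,\beta)$. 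Fix also surjections $e_\beta:\lambda\to\beta+1$ for $\beta<\lambda^+$ and a bijection $\pi:\lambda\leftrightarrow\lambda\times\lambda$. Given $c:[\lambda^+]^2\to\lambda$, set $c^+(\alpha,\beta):=0$ below $\lambda$, and for $\beta\ge\lambda$ write $(i,j):=\pi(c(\alpha,\beta))$ and put $c^+(\alpha,\beta):=e_{\gamma(j,\beta)}(i)$, where $\gamma(j,\beta)$ is the ordinal extracted from walking from $\beta$ to $j$ — for instance $\gamma(j,\beta):=\rho_2(j,\beta)$ or a coordinate of the trace. The point is that $\gamma(j,\beta)$ depends only on the pair $(j,\beta)$, not on $\alpha$, so it plays exactly the role of $d(j,\beta)$ in Theorems~\ref{thm62} and~\ref{thm64}.

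The verification then proceeds in two movements, paralleling those theorems. First, a pigeonhole/walks lemma: for every cofinal (or, for Clause~(2), full-sized in the rectangular sense) $B\s\lambda^+$, there is a single $j<\lambda$ such that $\{\gamma<\lambda^+\mid \{\beta\in B\mid \gamma(j,\beta)=\gamma\}\text{ is cofinal}\}$ is cofinal in $\lambda^+$. This is where the non-reflecting stationary set does its work: the standard unboundedness-of-$\rho_2$-type arguments over a non-reflecting stationary subset of $E^{\lambda^+}_{\ge\chi}$ (cf.\ Todor\v{c}evi\'c's minimal-walks machinery) give, after walking from elements of $B$ into a fixed club of guessing points, that for club-many $\gamma$ the fiber $\{\beta\in B\mid \gamma(j,\beta)=\gamma\}$ is stationary, for a fixed $j$ obtained by a further regularity pigeonhole on $\lambda^+$ over the $\lambda$ possible values of the walk parameter. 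The condition $\chi\le\cf(\lambda)$ ensures that the relevant sets of size $<\chi$ — which is what $\pr_1(\dots,\chi)_p$ quantifies over — sit below points of cofinality $\ge\chi$ in $S$, so the walks from a $b\in[\lambda^+]^{<\chi}$ to a fixed $j$ stabilize coherently across $b$.

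Second, once $j$ is fixed and $\Gamma=\{\gamma<\lambda^+\mid B_\gamma^j\text{ cofinal}\}$ is shown cofinal, the argument is exactly the pull-back seen in the warm-up: given $\tau:\mu\to\lambda^+$, pick $\gamma\in\Gamma$ with $\im(\tau)\s\gamma$, for each $\epsilon<\mu$ choose $i_\epsilon<\lambda$ with $e_\gamma(i_\epsilon)=\tau(\epsilon)$, define $\tau':\mu\to\lambda$ by $\tau'(\epsilon):=\pi^{-1}(i_\epsilon,j)$, apply the hypothesis that $c$ witnesses $\pr_1(\lambda^+,\lambda^+,\lambda,\chi)_p$ (resp.\ the rectangular version in Clause~(2)) to $B_\gamma^j$ and $\tau'$ to extract $a<b$ with $c[\alpha,\beta]=\tau'(p(\alpha,\beta))$ throughout, and then unwind: $\pi(c(\alpha,\beta))=(i_\epsilon,j)$ with $\epsilon=p(\alpha,\beta)$, the walk satisfies $\gamma(j,\beta)=\gamma$ since $\beta\in B_\gamma^j$, hence $c^+(\alpha,\beta)=e_\gamma(i_\epsilon)=\tau(p(\alpha,\beta))$. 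Clause~(2) differs only in that one restricts walks to the relevant rectangle and uses the rectangular pigeonhole for $d=\gamma(j,\cdot)$; the structure is identical.

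The main obstacle I expect is the walks lemma in the first movement — proving that a single $j<\lambda$ works for all of $B$ simultaneously, i.e., that the map $\beta\mapsto\gamma(j,\beta)$ has cofinally many cofinal fibers. The subtlety is that the walk parameter feeding $e_{\gamma(j,\beta)}$ must take only $\lambda$ values (so that a pigeonhole on $\lambda^+$ applies to isolate $j$) while still ranging over cofinally many distinct $\gamma$'s as $\beta$ varies; this is precisely the tension resolved by non-reflection, which forces the trace of a walk ending at $j$ to pass through unboundedly high ordinals of the club, and one must be careful that the coherence needed across the $<\chi$-sized sets $a,b$ holds — this is where $\chi\le\cf(\lambda)$ and the placement of $S$ inside $E^{\lambda^+}_{\ge\chi}$ are used. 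A secondary bookkeeping point is ensuring the right inverse $e_\gamma^{-1}$ and the choice of $\gamma$ above $\im(\tau)$ interact correctly, but that is routine given the warm-up.
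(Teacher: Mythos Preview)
Your proposal has a structural gap that goes beyond the ``walks lemma'' you flag as the main obstacle. You insist that $\gamma(j,\beta)$ depend only on $(j,\beta)$, so that it plays the role of $d(j,\beta)$ from Theorems~\ref{thm62} and~\ref{thm64}. But for $\pr_1(\ldots,\chi)_p$ with $\chi>2$ you then need, for each small $b\in[\lambda^+]^{<\chi}$, that $\gamma(j,\beta)$ take a \emph{common} value across all $\beta\in b$; this is exactly what the Engelking--Kar\l owicz functions buy in Theorem~\ref{thm64} (under $\lambda^{<\chi}=\lambda$) and what the auxiliary $\pr_1$-coloring $d$ buys in Theorem~\ref{thm62}. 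Walks from distinct $\beta,\beta'\in b$ down to a fixed $j<\lambda$ have no reason to share any particular trace ordinal, and your gesture that they ``stabilize coherently across $b$'' because $|b|<\chi\le\cf(\lambda)$ has no mechanism behind it. (Your concrete suggestions also fail for more elementary reasons: $\rho_2(j,\beta)$ is a natural number, and a fixed coordinate of the trace has no unboundedness property.)

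The paper's construction is genuinely different: the walk is taken from $\beta$ down to $\alpha$, so $c^+(\alpha,\beta)$ depends on $\alpha$ through the trace $\Tr(\alpha,\beta)$ and not merely through $c(\alpha,\beta)$. Two further ingredients you are missing do the real work. First, an almost disjoint family $\langle Z_\delta\mid\delta<\lambda^+\rangle$ of subsets of $\lambda$ of size $\cf(\lambda)$: writing $(i,\zeta)=\pi(c(\alpha,\beta))$, one halts the walk at the least $n$ with $\zeta\in Z_{\Tr(\alpha,\beta)(n)}$ and sets $c^+(\alpha,\beta):=e_{h(\Tr(\alpha,\beta)(n))}(i)$, where $h:\lambda^+\to\lambda^+$ is a surjection each of whose fibers is stationary in the non-reflecting set $\Gamma$. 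Second, the verification does not run through a pigeonhole claim over $\mathcal B$ at all; instead one works on the stationary set $\Delta=\{\delta\in\Gamma\mid h(\delta)=\gamma\}$, uses non-reflection to see that each walk from $\beta\in x_\delta$ to $\delta$ has last step below $\delta$, applies Fodor to freeze both this last-step bound and a single $\zeta'\in Z_\delta$ avoiding the earlier $Z$-sets along those walks (here $|x_\delta|<\chi\le\cf(\lambda)$ and almost disjointness are used), and then standard walk coherence gives that for every $\alpha$ above the frozen bound and every $\beta\in x_\delta$ the walk $\Tr(\alpha,\beta)$ halts exactly at $\delta$. The common halting point $\delta$ is thus an \emph{intermediate} ordinal between $a$ and $b$, not a target below $\lambda$; this is how coherence across $b$ is achieved.
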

\begin{proof} Fix a bijection $\pi:\lambda\leftrightarrow\lambda\times\lambda$.
For every $\beta<\lambda^+$, fix a surjection $e_\beta:\lambda\rightarrow\beta+1$.
Fix a non-reflecting stationary set $\Gamma\s E^{\lambda^+}_{\ge\chi}$
and a surjection $h:\lambda^+\rightarrow\lambda^+$
with the property  that  $H_\gamma:=\{\alpha\in \Gamma\mid h(\alpha)=\gamma\}$ is stationary for all $\gamma<\lambda^+$.
Fix a sequence $\vec Z=\langle Z_\gamma\mid \gamma<\lambda^+\rangle$ of elements of $[\lambda]^{\cf(\lambda)}$
such that, for all $\gamma<\delta<\lambda^+$, $|Z_\gamma\cap Z_\delta|<\cf(\lambda)$.

Let $\vec C=\langle C_\alpha\mid\alpha<\lambda^+\rangle$ be a sequence such that
$C_\alpha$ is a closed subset of $\alpha$ with $\sup(C_\alpha)=\sup(\alpha)$ and $\acc(C_\alpha)\cap\Gamma=\emptyset$, for every $\alpha<\lambda^+$.
We shall be conducting walks on ordinals along $\vec C$ (see \cite{TodWalks} for a comprehensive treatment).
First, for all $\alpha<\beta<\lambda^+$, define a function $\Tr(\alpha,\beta):\omega\rightarrow\beta+1$, by recursion on $n<\omega$, as follows:
$$\Tr(\alpha,\beta)(n):=\begin{cases}
\beta,&n=0\\
\min(C_{\Tr(\alpha,\beta)(n-1)}\setminus\alpha),&n>0\ \&\ \Tr(\alpha,\beta)(n-1)>\alpha\\
\alpha,&\text{otherwise}
\end{cases}$$

Then, derive a function $\rho_2:[\lambda^+]^2\rightarrow\omega$ via
$$\rho_2(\alpha,\beta):=\min\{n<\omega\mid \Tr(\alpha,\beta)(n)=\alpha\}.$$

Now, given a coloring $c:[\lambda^+]^2\rightarrow\lambda$,
we define a corresponding coloring $c^+:[\lambda^+]^2\rightarrow\lambda^+$, as follows.
For every pair $(\alpha,\beta)\in[\lambda^+]^2$, first let $(i,\zeta):=\pi(c(\alpha,\beta))$;
then, if there exists $n<\omega$ such that $\zeta\in Z_{\Tr(\alpha,\beta)(n)}$, let
$$c^+(\alpha,\beta):=e_{h(\Tr(\alpha,\beta)(n))}(i)$$
for the least such $n$.
Otherwise,  let $c^+(\alpha,\beta):=0$.

Observe that the color $c(\alpha,\beta)$ is again split to two terms, but this time the term $\zeta$ is used as a halting condition in the walk,
and then $h$ translates the halting point $\delta$ into the ordinal $\gamma$ corresponding to the surjection $e_{\gamma}$.

\smallskip

To see that $c^+$ is as sought,
let $p:[\lambda^+]^2\rightarrow\mu$ be an arbitrary partition with $\mu\le\lambda$.
Assume one of the following:
\begin{enumerate}
\item  $c$ witnesses $\pr_1(\lambda^+,\lambda^+,\lambda,\chi)_p$
and we are given a pairwise disjoint subfamily $\mathcal A$ of $[\lambda^+]^{<\chi}$ of size $\lambda^+$,
and a prescribed function $\tau:\mu\rightarrow\lambda^+$;
\item  $c$ witnesses $\pr_1(\lambda^+,\lambda^+\circledast\lambda^+,\lambda,\chi)_p$
and we are given two pairwise disjoint subfamilies $\mathcal A,\mathcal B$ of $[\lambda^+]^{<\chi}$ of size $\lambda^+$,
and a prescribed function $\tau:\mu\rightarrow\lambda^+$.
\end{enumerate}

The proofs from either of the assumptions above are very similar. We
will present them simultaneously, indicating by ``Case~(1)'' and
``Case~(2)'' the different parts.

In case~(2), for every $\alpha<\lambda^+$, pick $a_\alpha\in\mathcal A$ and $b_\alpha\in\mathcal B$ with $\min(x_\alpha)>\alpha$, where $x_\alpha:=a_\alpha\cup b_\alpha$.
In case~(1), for every $\alpha<\lambda^+$, pick $a_\alpha\in\mathcal A$ with $\min(x_\alpha)>\alpha$, where  $x_\alpha:=a_\alpha$.

Let $D$ be some club in $\lambda^+$ such that, for every $\delta\in D$ and $\alpha<\delta$, $\sup(x_\alpha)<\delta$.
This ensures that for every $(\alpha,\delta)\in[D]^2$, $\sup(x_\alpha)<\delta<\min(x_\delta)$,
so that $\langle x_\delta\mid \delta\in D\rangle$ is $<$-increasing.

Set $\gamma:=\sup(\im(\tau))$. As $\mu\le\lambda$, it is the case that $\gamma<\lambda^+$,
and hence $\Delta:=\{\delta\in D\cap \Gamma\mid h(\delta)=\gamma\}$ is stationary.
Next, define two functions $f:\Delta\rightarrow\lambda^+$ and $g:\Delta\rightarrow\lambda$ via:
\begin{itemize}
\item $f(\delta):=\sup\{\sup(C_{\Tr(\delta,\beta)(i)}\cap\delta)\mid \beta\in x_\delta, i<\rho_2(\delta,\beta)\}$ and
\item $g(\delta):=\min(Z_\delta\setminus\bigcup\{ Z_{\Tr(\delta,\beta)(i)}\mid \beta\in x_\delta, i<\rho_2(\delta,\beta)\}$.
\end{itemize}

For all $\delta\in\Delta$, $\beta\in x_\delta$ and $i<\rho_2(\delta,\beta)$, $\acc(C_{\Tr(\delta,\beta)(i)})\cap\Gamma=\emptyset$, so  $\sup(C_{\Tr(\delta,\beta)(i)}\cap\delta)<\delta$.
It thus follows from $|x_\delta|<2\cdot\chi\le\cf(\delta)$ that $f(\delta)<\delta$.
Also, since  $|x_\delta|<\cf(\lambda)$, $g(\delta)$ is well-defined.
Fix $(\xi',\zeta')\in\lambda^+\times\lambda$ for which $\Delta':=\{\delta\in\Delta\mid f(\delta)=\xi'\ \&\ g(\delta)=\zeta'\}$ is stationary.

As the prescribed $\tau$ is a function from $\mu$ to $\gamma+1$, we
may fix, for every $\epsilon<\mu$, an $i_\epsilon<\lambda$ such that
$e_\gamma(i_\epsilon)=\tau(\epsilon)$.  Define a function
$\tau':\mu\rightarrow\lambda$ via
$\tau'(\epsilon):=\pi^{-1}(i_\epsilon,\zeta')$.  As $\Delta'\s D$ and
$|\Delta'|=\lambda^+$, we infer that $\mathcal A':=\{ a_\delta\mid \delta\in\Delta'\}$ is a subfamily of $\mathcal A$ of size
$\lambda^+$.  Likewise, in Case~(2), we also have that $\mathcal B':=\{ b_\delta\mid \delta\in\Delta'\}$ is a subfamily of $\mathcal B$
of size $\lambda^+$.  So, in Case~(1) (resp.~Case~(2)), we may fix
$a,b\in \mathcal A'$ (resp.~$a\in\mathcal A'$ and $b\in\mathcal B'$)
with $a<b$ such that $c(\alpha,\beta)=\tau'(p(\alpha,\beta))$ for all
$\alpha\in a$ and $\beta\in b$,

\begin{claim}\label{claim641} Let $(\alpha,\beta)\in a\times b$. Then $c^+(\alpha,\beta)=\tau(p(\alpha,\beta))$.
\end{claim}
\begin{proof}
Denote $\epsilon:=p(\alpha,\beta)$.  By the definition of $\tau'$, $c(\alpha,\beta)=\tau'(\epsilon)=\pi^{-1}(i_\epsilon,\zeta')$.

By the choice of $b$, let us fix $\delta\in\Delta'$ such that $b\s x_\delta$.
As $\beta\in x_\delta$, $\xi'=f(\delta)<\delta<\beta$. Likewise, since $\alpha\in a\in\mathcal A'$, $\xi'<\alpha$.
Altogether,
$$\max\{\sup(C_{\Tr(\delta,\beta)(i)}\cap\delta)\mid i<\rho_2(\delta,\beta)\}\le f(\delta)=\xi'<\alpha<\delta<\beta.$$
Now, by a standard fact from the theory of walks on ordinals (see \cite[Claim~3.1.2]{paper15}),
$\Tr(\alpha,\beta)(i)=\Tr(\delta,\beta)(i)$ for all $i<\rho_2(\delta,\beta)$,
and $\Tr(\alpha,\beta)(\rho_2(\delta,\beta))=\delta$.
Recalling that $g(\delta)=\zeta'$, this means that
$n:=\rho_2(\delta,\beta)$ is the least integer for which  $\zeta'\in Z_{\Tr(\alpha,\beta)(n)}$.
Therefore, by the definition of $c^+$,
$$c^+(\alpha,\beta)=e_{h(\Tr(\alpha,\beta)(n))}(i_\epsilon)=e_{h(\delta)}(i_\epsilon)=e_\gamma(i_\epsilon)=\tau(\epsilon),$$
as sought.
\end{proof}
This completes the proof.
\end{proof}

Unlike the preceding theorem, the proof of the next does not employ
a surjection $h:\lambda^+\rightarrow\lambda^+$,
since it is still open whether for every singular cardinal $\lambda$ 
there is a $\vec C$-sequence that gives rise to a decomposition of $\lambda^+$ into $\lambda^+$ many walk-wise-large sets.
In the template formula ``$c^+(\alpha,\beta):=e_\gamma(i)$'', instead of letting $\gamma:=h(\delta)$ for some well-chosen $\delta$ in the walk from $\beta$ down to $\alpha$,
we shall let $\gamma$ be the $\xi^{th}$ element of $C_\delta$, for well-chosen $\delta$ in the walk \emph{and} $\xi<\lambda$.

\begin{thm}\label{thm66} Suppose that $\lambda$ is a singular cardinal of uncountable cofinality
and $\chi\le\cf(\lambda)$.
For every coloring $c:[\lambda^+]^2\rightarrow\lambda$,
there exists a corresponding coloring $c^+:[\lambda^+]^2\rightarrow\lambda^+$
satisfying that for every partition $p:[\lambda^+]^2\rightarrow\mu$ with $\mu\le\lambda$:
\begin{enumerate}
\item if $c$ witnesses $\pr_1(\lambda^+,\lambda^+,\lambda,\chi)_p$
then $c^+$ witnesses $\pr_1(\lambda^+,\lambda^+,\lambda^+,\chi)_p$;
\item if $c$ witnesses $\pr_1(\lambda^+,\lambda^+\circledast\lambda^+,\lambda,\chi)_p$
then $c^+$ witnesses $\pr_1(\lambda^+,{\lambda^+\circledast\lambda^+},\allowbreak\lambda^+,\chi)_p$.
\end{enumerate}
\end{thm}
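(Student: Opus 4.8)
Continuing the theme of this section, the plan is to manufacture from the given coloring $c:[\lambda^+]^2\to\lambda$ a single coloring $c^+:[\lambda^+]^2\to\lambda^+$ of the shape ``$c^+(\alpha,\beta):=e_\gamma(i)$'', where $i$ together with two further values $\zeta$ and $\xi$ are read off from $c(\alpha,\beta)$ through a fixed bijection $\pi:\lambda\leftrightarrow\lambda\times\lambda\times\lambda$, and where $\gamma$ is extracted from a walk on ordinals as the $\xi^{\text{th}}$ element of $C_\delta$ for a suitable ordinal $\delta$ met in the walk from $\beta$ down to $\alpha$. Concretely, I will fix surjections $e_\beta:\lambda\twoheadrightarrow\beta+1$ for $\beta<\lambda^+$, a sequence $\langle Z_\gamma\mid\gamma<\lambda^+\rangle$ of members of $[\lambda]^{\cf(\lambda)}$ whose pairwise intersections have size $<\cf(\lambda)$, and a $\vec C$-sequence $\langle C_\alpha\mid\alpha<\lambda^+\rangle$ with each $C_\alpha$ a club in $\alpha$ of order type at most $\lambda$; I will define $\Tr$ and $\rho_2$ along $\vec C$ by the same recursion as in the proof of Theorem~\ref{thm65}, and, for $(\alpha,\beta)\in[\lambda^+]^2$ with $(i,\zeta,\xi):=\pi(c(\alpha,\beta))$, I will set $c^+(\alpha,\beta):=0$ unless there is $n<\omega$ with $\zeta\in Z_{\Tr(\alpha,\beta)(n)}$, in which case, writing $\delta:=\Tr(\alpha,\beta)(n)$ for the least such $n$, I will set $c^+(\alpha,\beta):=e_\gamma(i)$ for $\gamma$ the $\xi^{\text{th}}$ element of $C_\delta$ if $\xi<\otp(C_\delta)$, and $c^+(\alpha,\beta):=0$ otherwise. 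Thus the color $c(\alpha,\beta)$ is split into three parts: $i$ feeds the stretching, $\zeta$ serves as a halting condition for the walk, and $\xi$ selects, inside the club of the halting point, the index of the surjection to be applied --- this last coordinate is what replaces the surjection $h:\lambda^+\to\lambda^+$ of Theorem~\ref{thm65}, whose existence relies on a walk-wise-large decomposition of $\lambda^+$ not known to exist for singular $\lambda$.

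The verification will mirror that of Theorem~\ref{thm65}. Fixing a partition $p:[\lambda^+]^2\to\mu$ with $\mu\le\lambda$ and, in Case~(1), a pairwise disjoint $\mathcal A\s[\lambda^+]^{<\chi}$ of size $\lambda^+$ together with a target $\tau:\mu\to\lambda^+$ (in Case~(2), also a pairwise disjoint $\mathcal B$ of size $\lambda^+$), I will pick $a_\alpha$ (and $b_\alpha$) with $\min(x_\alpha)>\alpha$ for $x_\alpha$ equal to $a_\alpha$ (resp.\ $a_\alpha\cup b_\alpha$), a club $D$ along which $\langle x_\delta\mid\delta\in D\rangle$ is $<$-increasing, and I will put $\bar\gamma:=\sup(\im(\tau))<\lambda^+$. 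The one new feature is that in place of $h$ I will use a stationary set $\Gamma$ of ordinals of cofinality $\cf(\lambda)$ below $\lambda^+$ with the twin properties that (a) the walk-catching fact \cite[Claim~3.1.2]{paper15} is applicable at every member of $\Gamma$, and (b) $\{\delta\in\Gamma\mid\bar\gamma\in C_\delta\}$ is stationary --- and then $\otp(C_\delta\cap\bar\gamma)<\cf(\lambda)<\lambda$ holds automatically. Restricting attention to those $\delta\in\Gamma\cap D$ with $\bar\gamma\in C_\delta$, I will define $f$ and $g$ exactly as in Theorem~\ref{thm65}: $f(\delta)<\delta$ because property~(a) makes each of the $<\cf(\delta)$ terms $\sup(C_{\Tr(\delta,\beta)(i)}\cap\delta)$ be below $\delta$, while $g(\delta)$ is well-defined by the near-disjointness of the $Z_\gamma$'s. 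Next, besides stabilizing $f$ and $g$ to constants $\xi'$ and $\zeta'$ on a stationary subset $\Delta'$, I will also stabilize $\otp(C_\delta\cap\bar\gamma)$ to a constant $\xi<\cf(\lambda)$ on $\Delta'$. Choosing $i_\epsilon<\lambda$ with $e_{\bar\gamma}(i_\epsilon)=\tau(\epsilon)$ for each $\epsilon<\mu$, I will set $\tau'(\epsilon):=\pi^{-1}(i_\epsilon,\zeta',\xi)$, apply the hypothesis on $c$ over $p$ to $\{a_\delta\mid\delta\in\Delta'\}$ (and to $\{b_\delta\mid\delta\in\Delta'\}$ in Case~(2)) to obtain $a<b$ from these families realizing $c(\alpha,\beta)=\tau'(p(\alpha,\beta))$ throughout $a\times b$, and conclude as in Claim~\ref{claim641}: for the $\delta\in\Delta'$ with $b\s x_\delta$, the walk from $\alpha$ to $\beta$ agrees with the walk from $\delta$ to $\beta$ until it reaches $\delta$ (using $f(\delta)=\xi'<\alpha<\delta$) and halts precisely there (using $g(\delta)=\zeta'$ and minimality of $n$), so that, with $\gamma$ the $\xi^{\text{th}}$ element of $C_\delta$, which is $\bar\gamma$, we get $c^+(\alpha,\beta)=e_\gamma(i_\epsilon)=e_{\bar\gamma}(i_\epsilon)=\tau(\epsilon)=\tau(p(\alpha,\beta))$.

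The heart of the matter --- and the step I expect to be genuinely delicate --- is producing a $\vec C$-sequence together with a stationary set $\Gamma$ of ordinals of cofinality $\cf(\lambda)$ below $\lambda^+$ meeting demands (a) and (b) above. Demand~(b) is cheap once $\Gamma$ is fixed: split $\Gamma$ into $\lambda^+$ stationary pieces $\langle\Gamma_\gamma\mid\gamma<\lambda^+\rangle$ and arrange that $\gamma\in C_\delta$ whenever $\delta\in\Gamma_\gamma$ and $\delta>\gamma$. The obstacle is~(a). In Theorem~\ref{thm65} the walk-catching fact was made applicable along $\Gamma$ by demanding $\acc(C_\alpha)\cap\Gamma=\emptyset$ for all $\alpha$, which forces $\Gamma$ to be non-reflecting --- a resource that need not exist when $\lambda$ is singular, and this is exactly where the hypothesis that $\cf(\lambda)$ be \emph{uncountable} has to be invoked. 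The plan is to lean on pcf theory: basing $\vec C$ on a scale for $\lambda$ and letting $\Gamma$ consist of approachable points of that scale of cofinality $\cf(\lambda)$ --- a stationary supply of which exists in $\zfc$ by Shelah's theorem on approachability at the successor of a singular cardinal --- one gains enough control on the clubs $C_\alpha$ that walks descending to a point $\delta\in\Gamma$ do so ``tamely'': the proper initial segment of the walk from such a $\delta$ up to any $\beta$ runs, after finitely many steps, through ordinals whose clubs cannot accumulate $\delta$, so that indeed $\sup(C_{\Tr(\delta,\beta)(i)}\cap\delta)<\delta$ for every $i<\rho_2(\delta,\beta)$. Carrying out this tameness analysis, and reconciling it with the requirement in~(b) that each $C_\delta$ ($\delta\in\Gamma$) also carry the prescribed element, is the real work; granted it, the remainder is a routine adaptation of the argument of Theorem~\ref{thm65}, now carrying the extra coordinate $\xi$ through the bookkeeping.
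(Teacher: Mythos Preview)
Your plan diverges from the paper's in two linked ways, and one of them hides a genuine gap.

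\textbf{The gap.} Your property~(a) --- that for every $\delta\in\Gamma$ and \emph{every} $\beta>\delta$, each $\sup(C_{\Tr(\delta,\beta)(i)}\cap\delta)$ is below $\delta$ --- forces $\delta\notin\acc(C_\alpha)$ for all $\alpha>\delta$ (take $\beta:=\alpha$; the walk has $\Tr(\delta,\alpha)(0)=\alpha$). Since each $C_\alpha$ is club in $\alpha$, this makes $\Gamma\cap\alpha$ nonstationary in $\alpha$ whenever $\cf(\alpha)>\omega$; in other words, $\Gamma$ must be a non-reflecting stationary subset of $E^{\lambda^+}_{\cf(\lambda)}$. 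For singular $\lambda$ this is \emph{not} a $\zfc$ resource --- its failure is consistent from large cardinals --- so approachability alone will not deliver it. You do not in fact need~(a) for all $\beta$, only for $\beta\in x_\delta$; but since $x_\delta$ is chosen arbitrarily from an arbitrary family, a fixed $\Gamma$ with this weaker property would still have to be tailored to the family, which your outline does not do.

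\textbf{How the paper avoids this.} The paper drops the $Z$-set apparatus entirely and uses only two coordinates $(i,\xi)$: the halting condition is ``$\otp(C_{\Tr(\alpha,\beta)(n)})>\xi$'', and the very same $\xi$ then selects $C_\delta(\xi)$ as the index of the surjection. The needed structural input is not a uniform~(a) but a family-adapted statement (Claim~\ref{c666}, quoted from \cite{paper34}): for any pairwise disjoint $\mathcal X\subseteq[\lambda^+]^{<\cf(\lambda)}$ of size $\lambda^+$ there is a stationary $\Delta$ and, for each $\gamma\in\Delta$, a member $x_\gamma\in\mathcal X$ above $\gamma$ such that the walk from each $\beta\in x_\gamma$ down to $\gamma$ passes only through ordinals whose clubs have order type below $\cf(\gamma)$. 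This simultaneously gives walk-catching \emph{and} a gap in order types in which to place $\xi$; stabilizing $\xi$ and $C_\delta(\xi)$ over $\Delta$ finishes the proof. Because the stationary set is produced \emph{after} the family is given, no non-reflection hypothesis is needed, and the third coordinate $\zeta$ (and the sequence $\langle Z_\gamma\rangle$) becomes superfluous.

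If you want to keep your three-coordinate template, you must replace the uniform~(a) by a family-dependent statement of the Claim~\ref{c666} type and choose $x_\delta$ accordingly; at that point the order-type halting already suffices and your $\zeta$-coordinate is doing no work.
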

\begin{proof} By the proof of Case~1 of Theorem~4.21 from \cite{paper34}, 
we may fix a $C$-sequence $\vec C=\langle C_\alpha\mid\alpha<\lambda^+\rangle$ 
such that $\otp(C_\alpha)<\lambda$ for all $\alpha<\lambda^+$
and such that the functions $\Tr$ and $\rho_2$ derived from walking along $\vec C$ (as defined in the proof of Theorem~\ref{thm65}) satisfy the following.
\begin{claim}\label{c666}
    Let $\mathcal X$ be a pairwise disjoint subfamily of $[\lambda^+]^{<\cf(\lambda)}$ of size $\lambda^+$.
    Then there exists a stationary set $\Delta\s\lambda^+$, a sequence $\langle x_\gamma\mid \gamma\in\Delta\rangle$,
    and an ordinal $\varepsilon<\lambda^+$, such that,
    for every $\gamma\in\Delta$: 
    \begin{itemize}
      \item $x_\gamma\in\mathcal X$ with $\min(x_\gamma)>\gamma>\varepsilon$;
      \item for all $\alpha\in(\varepsilon,\gamma)$ and $\beta \in x_\gamma$, $\gamma\in\im(\Tr(\alpha,\beta))$;
      \item $\cf(\gamma)>\sup\{\otp(C_{\Tr(\gamma,\beta)(n)})\mid \beta\in x_\gamma, n<\rho_2(\gamma,\beta)\}$.
    \end{itemize}
  \end{claim}
\begin{proof} It suffices to prove that for every club $D\s\lambda^+$ there are $\gamma\in D$, $x_\gamma\in\mathcal X$ and an ordinal $\varepsilon<\gamma$ such that 
the  three bullets above hold. 
Now, given an arbitrary club $D\s\lambda^+$,  Claim~4.21.2 from \cite{paper34}
provides  $\gamma\in D$, $x_\gamma\in\mathcal X$ and an ordinal $\varepsilon<\gamma$ such that 
the first two bullets hold.
The proof of that claim makes it clear that
$\cf(\gamma)>|C|$ where $C:=\bigcup\{C_{\Tr(\gamma,\beta)(n)}\mid \beta\in x_\gamma, n\le\rho_2(\gamma,\beta)\}$,
and goes through even if we  require $\cf(\gamma)>|C|^+$.
In particular, this will give $\cf(\gamma)>\sup\{\otp(C_{\Tr(\gamma,\beta)(n)})\mid \beta\in x_\gamma, n<\rho_2(\gamma,\beta)\}$.
\end{proof}

Fix a bijection $\pi:\lambda\leftrightarrow\lambda\times\lambda$.
For every $\beta<\lambda^+$, fix a surjection $e_\beta:\lambda\rightarrow\beta+1$.
Now, given a coloring $c:[\lambda^+]^2\rightarrow\lambda$,
we define a corresponding coloring $c^+:[\lambda^+]^2\rightarrow\lambda^+$ as follows.
For every pair $(\alpha,\beta)\in[\lambda^+]^2$, first let $(i,\xi):=\pi(c(\alpha,\beta))$,
and then, if there exists $n<\omega$ such that $\otp(C_{\Tr(\alpha,\beta)(n)})>\xi$,  let
$$c^+(\alpha,\beta):=e_{C_{\Tr(\alpha,\beta)(n)}(\xi)}(i)$$
for the least such $n$. Otherwise, just let $c^+(\alpha,\beta):=0$.

So here the color $c(\alpha,\beta)$ is split to two terms. The term $\xi$ is used both as a halting condition in the walk
and as a pointer for a specific element $\gamma=C_\delta(\xi)$ (corresponding to the surjection $e_{\gamma}$) in the ladder $C_\delta$ of the halting point $\delta$.

To see that $c^+$ is as sought, let $p:[\lambda^+]^2\rightarrow\mu$ be an arbitrary partition with $\mu\le\lambda$.
There are two cases to consider:
\begin{enumerate}
\item  Assume $c$ witnesses $\pr_1(\lambda^+,\lambda^+,\lambda,\chi)_p$,
and we are given a pairwise disjoint subfamily $\mathcal A$ of $[\lambda^+]^{<\chi}$ of size $\lambda^+$,
and a prescribed function $\tau:\mu\rightarrow\lambda^+$.

In this case, appeal to Claim~\ref{c666} with $\mathcal X:=\mathcal A$,
and obtain a stationary set $\Delta\s\lambda^+$, a sequence $\langle x_\gamma\mid\gamma\in\Delta\rangle$ and an ordinal $\varepsilon<\lambda^+$.
\item  Assume $c$ witnesses $\pr_1(\lambda^+,\lambda^+\circledast\lambda^+,\lambda,\chi)_p$,
and we are given two pairwise disjoint subfamilies $\mathcal A,\mathcal B$ of $[\lambda^+]^{<\chi}$ of size $\lambda^+$,
and a prescribed function $\tau:\mu\rightarrow\lambda^+$.

In this case, appeal to Claim~\ref{c666} with 
some pairwise disjoint subfamily $\mathcal X$ of $[\lambda^+]^{<\cf(\lambda)}$ of size $\lambda^+$ such that, for every $x\in\mathcal X$,
there are $a\in\mathcal A$ and $b\in\mathcal B$ such that $x=a\uplus b$.
In return, we obtain a stationary set $\Delta\s\lambda^+$, a sequence $\langle x_\gamma\mid\gamma\in\Delta\rangle$ and an ordinal $\varepsilon<\lambda^+$.
Then, for every $\gamma\in\Delta$, fix $a_\gamma\in\mathcal A$ and $b_\gamma\in\mathcal B$ such that $x_\gamma=a_\gamma\uplus b_\gamma$.
\end{enumerate}

Next, let $D$ be some club in $\lambda^+$ such that, for every $\delta\in D$ and $\alpha\in\Delta\cap\delta$, $\sup(x_\alpha)<\delta$.
By shrinking $D$, we may also assume that $\min(D)>\sup(\im(\tau))$. 
For every $\delta\in D\cap\Delta$, let $\xi_\delta$ denote the least ordinal $\xi<\lambda$ with $$\sup\{\otp(C_{\Tr(\delta,\beta)(n)})\mid \beta\in x_\delta, n<\rho_2(\delta,\beta)\}<\xi<\otp(C_\delta)$$ 
such that $C_\delta(\xi)>\sup(\im(\tau))$.
Fix $\xi<\lambda$ and $\gamma<\lambda^+$ for which $\Delta':=\{\delta\in \Delta\cap D\mid \xi_\delta=\xi\ \&\ C_\delta(\xi)=\gamma\}$.
As the prescribed  $\tau$ is a function from $\mu$ to $\gamma$,
for every $\epsilon<\mu$, we may fix $i_\epsilon<\lambda$ such that $e_\gamma(i_\epsilon)=\tau(\epsilon)$.
Define a function $\tau':\mu\rightarrow\lambda$ via $\tau'(\epsilon):=\pi^{-1}(i_\epsilon,\xi)$.
As $\Delta'\s D$ and $|\Delta'|=\lambda^+$, we infer that $\mathcal A':=\{ a_\delta\mid \delta\in\Delta'\}$ is a subfamily of $\mathcal A$ of size $\lambda^+$.
Likewise, in Case~(2), we also have that $\mathcal B':=\{ b_\delta\mid \delta\in\Delta'\}$ is a subfamily of $\mathcal B$ of size $\lambda^+$.
So, in Case~(1) (resp.~Case~(2)), we may fix $a,b\in \mathcal A'$ 
(resp.~$a\in\mathcal A'$ and $b\in\mathcal B'$)  with $a<b$
such that
$c(\alpha,\beta)=\tau'(p(\alpha,\beta))$ for all $\alpha\in a$ and $\beta\in b$,

\begin{claim} Let $(\alpha,\beta)\in a\times b$. Then $c^+(\alpha,\beta)=\tau(p(\alpha,\beta))$.
\end{claim}
\begin{proof}
Denote $\epsilon:=p(\alpha,\beta)$.  By the definition of $\tau'$, $\pi(c(\alpha,\beta))=\pi(\tau'(\epsilon))=(i_\epsilon,\xi)$.

By the choice of $b$, let us fix $\delta\in\Delta'$ such that $b\s x_\delta$.
As $\beta\in x_\delta$ and $\alpha\in a\in\mathcal A'$, 
$$\varepsilon<\alpha<\delta<\beta,$$
so that $\delta\in\im(\Tr(\alpha,\beta))$. 
Now, by the same standard fact used in the proof of Claim~\ref{claim641},
$\Tr(\alpha,\beta)(i)=\Tr(\delta,\beta)(i)$ for all $i<\rho_2(\delta,\beta)$,
and $\Tr(\alpha,\beta)(\rho_2(\delta,\beta))=\delta$.
Recalling the choice of $\xi$, this means that
$n:=\rho_2(\delta,\beta)$ is the least integer to satisfy 
$\otp(C_{\Tr(\alpha,\beta)(n)})>\xi$.
So, by the definition of $c^+$, we infer that
$$c^+(\alpha,\beta)=e_{C_{\Tr(\alpha,\beta)(n)}(\xi)}(i)=e_{C_\delta(\xi)}(i_\epsilon)=e_\gamma(i_\epsilon)=\tau(\epsilon),$$
as sought.
\end{proof}
This completes the proof.
\end{proof}

\section{Theorem~B: Strengthening hi-dimensional colorings}
In this short section, we prove Theorem~B.
Its proof follows Shelah's proof of the pump-up from   $\pr_1$ to $\pr_0$ \cite[Lemma~4.5]{sh365} and adds to it considerations to handle the partition.
This answers \cite[Question~46]{strongcoloringpaper} in the affirmative.

\begin{thm} \label{pr0topr1} Suppose that $\kappa$ is a regular uncountable cardinal and $\mu,\lambda,\chi,\theta$ are cardinals $\le\kappa$.
  Assume $\lambda^{<\chi}<\kappa\le2^\lambda$
and $\lambda^{<\chi}\le\theta^{<\chi}=\theta$.  

For every coloring
$c_1:[\kappa]^2\rightarrow\theta$, there exists a corresponding
coloring $c_0:[\kappa]^2\rightarrow\theta$ satisfying that for every partition
$p:[\kappa]^2\rightarrow\mu$:
\begin{enumerate}
\item if $c_1$ witnesses
$\pr_1(\kappa,\kappa,\theta,\chi)_p$, then $c_0$ witnesses
$\pr_0(\kappa,\kappa,\theta,\chi)_p$;
\item if $c_1$ witnesses
$\pro{\kappa}{\nu}{1}{\theta}{\chi}_p$, then $c_0$ witnesses
$\prz{\kappa}{\nu}{1}{\theta}{\chi}_p$.
\end{enumerate}
\end{thm}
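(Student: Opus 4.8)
The plan is to follow Shelah's classical pump-up from $\pr_1$ to $\pr_0$ \cite[Lemma~4.5]{sh365} and thread a partition $p$ through the construction. The rough idea of the classical argument is that $\pr_0$ is a high-dimensional gluing of many instances of $\pr_1$: given a disjoint family $\mathcal A\s[\kappa]^\sigma$ and a target matrix $(\tau_{i,j})_{i,j<\sigma}$, one replaces each $a\in\mathcal A$ by an ``expanded'' block obtained by spreading the $\sigma$ coordinates across $\sigma^2$ (or more) many slots, so that a single homogeneous $\pr_1$-pattern on the expanded blocks decodes back to the full prescribed matrix on the original $\sigma$-tuples. The expansion requires a bookkeeping device that assigns to each pair of coordinates $(i,j)$ a ``slot'', and the new coloring $c_0$ is defined from $c_1$ by composing with this decoding in such a way that $c_0(a(i),b(j))$ is read off from $c_1$ evaluated at the slot coding $(i,j)$; the cardinal-arithmetic hypotheses $\lambda^{<\chi}<\kappa\le 2^\lambda$ and $\lambda^{<\chi}\le\theta^{<\chi}=\theta$ are exactly what is needed to have enough room both to index the slots (using a suitable almost-disjoint / independent family of size $\kappa$ on $\lambda$, via $\kappa\le 2^\lambda$) and to absorb the extra colors coming from the matrix ($\theta^{<\chi}=\theta$).

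First I would fix the combinatorial scaffolding: an independent or strongly almost-disjoint family $\langle A_\eta\mid\eta<\kappa\rangle$ of subsets of $\lambda$ (available since $\kappa\le 2^\lambda$), together with a coding of finite sequences over $\theta$ by single colors in $\theta$ (available since $\theta^{<\chi}=\theta$), and enumerations witnessing $\lambda^{<\chi}$-many possible ``shapes'' of $\sigma$-tuples for $\sigma<\chi$. Next I would define $c_0:[\kappa]^2\to\theta$ from $c_1$: given $\alpha<\beta$, use the scaffolding to determine a ``virtual'' coordinate pair $(i,j)$ and a ``virtual'' ordinal $\gamma$, evaluate $c_1$ at the induced pair, and let $c_0(\alpha,\beta)$ be the decoded value; crucially, $p$ must be carried along so that the relation $c_0(\alpha,\beta)=\sigma$-matrix-entry$(p(\alpha,\beta))$ reduces to the corresponding $\pr_1$-over-$p$ relation $c_1(\alpha',\beta')=\tau^*(p(\alpha',\beta'))$ for a single derived function $\tau^*:\mu\to\theta$ built from the matrix $(\tau_{i,j})$. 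Then, given $\mathcal A$ and $(\tau_{i,j})_{i,j<\sigma}$ as in $\pr_0(\kappa,\kappa,\theta,\chi)_p$, I would (i) refine $\mathcal A$ to a subfamily of size $\kappa$ on which the relevant shape-parameters are constant (a $\Delta$-system / pigeonhole step, legitimate since $\lambda^{<\chi}<\kappa$ and $\kappa$ is regular uncountable), (ii) pass to the expanded family $\mathcal A^*$ of the corresponding blocks, still of size $\kappa$ and still pairwise disjoint with bounded order type $<\chi$, (iii) apply $\pr_1(\kappa,\kappa,\theta,\chi)_p$ to $\mathcal A^*$ and the single derived function $\tau^*$, and (iv) decode the resulting pair $a^*<b^*$ back to the desired $a<b$ in (the refined) $\mathcal A$ with $c_0(a(i),b(j))=\tau_{i,j}(p(a(i),b(j)))$ for all $i,j<\sigma$. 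For Clause~(2) one repeats the argument verbatim with $\mathcal A'\in[\mathcal A]^1$ in the unbalanced sense; since $\nu'=1$ there is essentially no extra work beyond checking the trivial $\mathcal A'$.

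The main obstacle I expect is step~(iv), the decoding: one must ensure that a single $p$-homogeneous $\pr_1$-pattern on the \emph{pairs} of the expanded blocks $a^*,b^*$ translates into \emph{simultaneous} control of all $\sigma\times\sigma$ values $c_0(a(i),b(j))$ with the \emph{correct} dependence on $p(a(i),b(j))$ rather than on $p$ evaluated at the expanded pairs. This is where Shelah's original argument is delicate even without $p$, and where the partition forces an extra idea: the expansion must be designed so that the $p$-cell of the relevant expanded pair determines, and is determined by, the $p$-cell of the original pair $(a(i),b(j))$ — i.e. one needs the expansion to be ``$p$-coherent''. I anticipate handling this by building the expansion out of the injective-fiber normal form (Proposition~\ref{onlyinjmatters} reduces to partitions with injective fibers, at least when $\mu\le\lambda$ and $\kappa=\lambda^+$; for general $\kappa$ one argues directly) so that distinct expanded coordinates land in distinct $p$-fibers in a predictable way, and then defining the derived $\tau^*:\mu\to\theta$ to encode the whole matrix $(\tau_{i,j})$ indexed by which fiber is hit. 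Verifying that this $\tau^*$ is well-defined and that the decoding is exact is the crux; the cardinal arithmetic merely guarantees the ingredients exist, so the real content is the bookkeeping that makes the one-dimensional $\pr_1$-over-$p$ conclusion unfold into the $\sigma$-dimensional $\pr_0$-over-$p$ conclusion.
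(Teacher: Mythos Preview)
Your proposal contains the right ingredients --- the injective family $\langle X_\alpha\mid\alpha<\kappa\rangle$ of subsets of $\lambda$, a pigeonhole step using $\lambda^{<\chi}<\kappa$, and an enumeration of ``patterns'' of size $\theta$ using $\theta^{<\chi}=\theta$ --- but the assembly via \emph{expanded blocks} is a wrong turn, and the obstacle you flag is an artifact of that wrong turn.

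The paper does \emph{not} pass to an expanded family $\mathcal A^*$ and does not apply $\pr_1$ to larger blocks. Instead, it applies $\pr_1$ to the original family $\mathcal A$ (after the pigeonhole refinement), with a single function $\tau^*:\mu\to\theta$ whose value $\tau^*(\epsilon)=j_\epsilon$ is an index into a fixed table $W$ of quadruples $(y^0,y^1,\mathcal Z,g)$. The coloring is simply $c_0(\alpha,\beta):=h_{c_1(\alpha,\beta)}(\alpha,\beta)$, where $h_j(\alpha,\beta)=g_j(X_\alpha\cap y_j^0,\,X_\beta\cap y_j^1)$. The position of $\alpha$ in its block $a$ is recovered \emph{not} by any expansion but by the fingerprint $X_\alpha\cap y$, which the pigeonhole step has stabilised to depend only on $\otp(a\cap\alpha)$. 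So once $c_1(\alpha,\beta)=\tau^*(p(\alpha,\beta))=j_\epsilon$, the index $j_\epsilon$ hands you the function $g^\epsilon$ with $g^\epsilon(f(\xi),f(\zeta))=\tau_{\xi,\zeta}(\epsilon)$, and the fingerprints hand you $(\xi,\zeta)$; the decoding is exact and there is no ``$p$-coherence'' issue at all, because you never evaluate $p$ at any pair other than the original $(\alpha,\beta)=(a(\xi),b(\zeta))$.

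Your proposed fix via injective fibers cannot work here in any case: Proposition~\ref{onlyinjmatters} is stated only for $\kappa=\lambda^+$ and $\mu\le\lambda$, and more fundamentally, any expansion that is tailored to $p$ would make $c_0$ depend on $p$, contradicting the theorem's requirement that a single $c_0$ work for \emph{every} partition. The point you are missing is that the $p$-dependence is absorbed entirely into the choice of $\tau^*$ at verification time, not into the construction of $c_0$. Drop the expansion step; define $c_0$ as the composition above, pigeonhole on the fingerprint pair $(y,f)$, encode each matrix column $\epsilon\mapsto(\tau_{\xi,\zeta}(\epsilon))_{\xi,\zeta<\sigma}$ as a single index $j_\epsilon$, and apply $\pr_1$ to $\mathcal A$ with $\tau^*(\epsilon)=j_\epsilon$. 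Clause~(2) then goes through with the only change that one first pigeonholes $\mathcal B$ to stabilise $(y^1,f^1)$, fixes the witnessing $a\in\mathcal A$, and computes $(y^0,f^0)$ for that single $a$.
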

\begin{proof} 
As $\kappa\le 2^\lambda$, we may fix an
injective sequence $\langle X_\alpha\mid \alpha<\kappa\rangle$ of subsets of $\lambda$.
\begin{claim}\label{stabwindow} For every $\sigma<\chi$ and
$a\in[\kappa]^{\sigma}$,
there are $y\in[\lambda]^{<\chi}$ and an injection $f:\sigma\rightarrow\mathcal P(y)$,
such that, for all $\alpha\in a$,
$X_{\alpha}\cap y=f(\otp(\alpha\cap a))$.
\end{claim}
\begin{proof} For all $\alpha<\beta<\kappa$, let
$\delta_{\alpha,\beta}:=\min(X_\alpha\symdiff X_\beta)$.
Now, let $y:=\{\delta(\alpha,\beta)\mid \alpha,\beta\in a, \alpha\neq\beta\}$
and then define a function $f:\sigma\rightarrow\mathcal P(y)$ via
$f(i):=X_{a(i)}\cap y$.
Evidently, $y$ and $f$ are as required.
\end{proof}

Consider the following set:
$$W:=\left\{ (y^0,y^1,\mathcal Z,g)\mid y^0,y^1\in[\lambda]^{<\chi},\ \mathcal Z\in[\mathcal P(y^0\cup y^1)]^{<\chi}\text{ and }  g:{\mathcal Z\times \mathcal Z}\rightarrow\theta\right\}.$$
It is clear that $|W|=\theta$, so let us fix an enumeration
$\langle (y^0_j,y^1_j,\mathcal Z_j,g_j)\mid j<\theta\rangle$ of
$W$.

For each $j<\theta$, define a function $h_j:[\kappa]^2\rightarrow\theta$ via:
$$h_j(\alpha,\beta):=\begin{cases}
g_j(X_\alpha\cap y^0_j,X_\beta\cap y^1_j)&\text{if }X_\alpha\cap
y^0_j\in\mathcal Z_j\text{ and }X_\beta\cap y^1_j\in\mathcal Z_j;\\
0&\text{otherwise}.\end{cases}$$

Finally, given a coloring $c_1:[\kappa]^2\rightarrow\theta$, define the coloring
$c_0:[\kappa]^2\rightarrow\theta$ via
$$c_0(\alpha,\beta):=h_{c_1(\alpha,\beta)}(\alpha,\beta).$$

(1) Suppose that $c_1$ witnesses $\pr_1(\kappa,\kappa,\theta,\chi)_p$.
To see that $c_0$ witnesses $\pr_0(\kappa,\kappa,\theta,\allowbreak\chi)_p$
fix an arbitrary $\sigma<\chi$, a $\kappa$-sized pairwise disjoint family
$\mathcal A \s[\kappa]^\sigma$
and a matrix $(\tau_{\xi,\zeta})_ {\xi,\zeta<\sigma}$ of
functions from $\mu$ to $\theta$. By Claim~\ref{stabwindow} and a pigeonhole
argument, fix a set
$y\in[\lambda]^{<\chi}$ and an injection $f:\sigma\rightarrow\mathcal P(y)$
such that, for all $a\in \mathcal A$ and $\alpha\in a$,
$X_{\alpha}\cap y=f(\otp(\alpha\cap a))$.

Denote $\mathcal Z:=\im(f)$.
For every $\epsilon<\mu$, define a function $g^\epsilon:\mathcal Z\times \mathcal Z\rightarrow\theta$ via
$$g^\epsilon(f(\xi),f(\zeta)):=\tau_{\xi,\zeta}(\epsilon).$$
Now pick $j_\epsilon<\theta$ such that
$(y_{j_\epsilon}^0,y_{j_\epsilon}^1,\mathcal Z_{j_\epsilon},g_{j_\epsilon})=(y,y,\mathcal Z,g^\epsilon)$.
Finally, define a function $\tau^*:\mu\rightarrow\theta$ via
$\tau^*(\epsilon):=j_\epsilon$
and then pick $(a,b)\in[\mathcal A]^2$ such that
$c_1(\alpha,\beta)=\tau^*(p(\alpha,\beta))$ for all $\alpha\in a$ and $\beta\in b$.

\begin{claim} \label{checkc0} Let $\xi,\zeta<\sigma$. Then $c_0(a(\xi),b(\zeta))=\tau_{\xi,\zeta}(p(a(\xi),b(\zeta))$.
\end{claim}
\begin{proof} Write $\epsilon:=p(a(\xi),b(\zeta))$. Then
$$c_1(a(\xi),b(\zeta))=\tau^*(p(a(\xi),b(\zeta)))=\tau^*(\epsilon)=j_\epsilon.$$
Altogether,
$$\begin{aligned}c_0(a(\xi),b(\zeta))=&\ h_{j_\epsilon}(a(\xi),b(\zeta))\\=&\ g_{j_\epsilon}(X_{a(\xi)}\cap y^0_{j_\epsilon},X_{b(\zeta)}\cap y^1_{j_\epsilon})\\=&\ g^\epsilon(f(\xi),f(\zeta))=\tau_{\xi,\zeta}(\epsilon),\end{aligned}$$
as sought.
\end{proof}

(2) Suppose that $c_1$ witnesses $\pro{\kappa}{\nu}{1}{\theta}{\chi}_p$.
To see that $c_0$ witnesses $\prz{\kappa}{\nu}{1}{\theta}{\chi}_p$,
fix an arbitrary $\sigma<\chi$, a $\nu$-sized pairwise disjoint family
$\mathcal A \s[\kappa]^\sigma$
and a $\kappa$-sized pairwise disjoint family $\mathcal B \s[\kappa]^\sigma$.
By Claim~\ref{stabwindow}, we may assume the existence of a set
$y^1\in[\lambda]^{<\chi}$ and an injection $f^1:\sigma\rightarrow\mathcal P(y^1)$,
such that, for all $b\in \mathcal B$ and $\beta\in B$,
$X_{\beta}\cap y^1=f^1(\otp(\beta\cap b))$.
Now, by the hypothesis on $c_1$, fix $a\in\mathcal A$ such that, for every
$\tau:\mu\to\theta$,
there exist $b\in\mathcal B$ with $a<b$ such that, for all
$\alpha\in a$ and $\beta\in b$,
$c_1(\alpha,\beta)=\tau(p(\alpha,\beta))$.

By Claim~\ref{stabwindow}, fix $y^0\in[\lambda]^{<\chi}$ and an injection
$f^0:\sigma\rightarrow\mathcal P(y^0)$
such that, for all $\alpha\in a$, $X_{\alpha}\cap y^0=f^0(\otp(\alpha\cap a))$.
Denote $\mathcal Z:=\im(f^0)\cup\im(f^1)$.

Now, given a matrix $(\tau_{\xi,\zeta})_{\xi,\zeta<\sigma}$ of
functions from $\mu$ to $\theta$, for every $\epsilon<\mu$, pick
any function $g^\epsilon:\mathcal Z\times \mathcal Z\rightarrow\theta$
satisfying that for all $\xi,\zeta<\sigma$:
$$g^\epsilon(f^0(\xi),f^1(\zeta))=\tau_{\xi,\zeta}(\epsilon).$$
Then pick $j_\epsilon<\theta$ such that
$(y_{j_\epsilon}^0,y_{j_\epsilon}^1,\mathcal Z_{j_\epsilon},g_{j_\epsilon})=(y^0,y^1,\mathcal Z,g^\epsilon)$.
Finally, define a function $\tau^*:\mu\rightarrow\theta$ via
$\tau^*(\epsilon):=j_\epsilon$
and then pick $b\in\mathcal B$ with $a<b$ such that
$c_1(\alpha,\beta)=\tau^*(p(\alpha,\beta))$ for all $\alpha\in a$
and $\beta\in b$.
The proof of Claim~\ref{checkc0} makes clear that
$c_0(a(\xi),b(\zeta))=\tau_{\xi,\zeta}(p(a(\xi),b(\zeta)))$ for all $\xi,\zeta<\sigma$.
\end{proof}

\section{Concluding remarks}
The pump up theorems which are presented here furnish the toolbox for working
in the theory of strong colorings over partitions with the basic tools which are
available in the classical theory: stretching the number of colors and
strengthening one high dimensional principle to a stronger one.

Also the spectrum
of methods is expanded, with walks on ordinals making their first appearance here.

Together with the independence results of Parts I and III of this series and 
the absolute results of this part, Ramsey theory over partitions emerges now as a
fruitful branch of Ramsey theory on uncountable cardinals.

Many more questions arise. We mention only a few of them.

\begin{question} Can the restriction ``$\chi\le\cf(\lambda)$'' in Theorem~A be waived?
\end{question}
\begin{question}Is it consistent that some positive Ramsey relation
  holds over a small partition at a successor of a singular cardinal?
\end{question}
For a positive integer $n$
and a partition $p:[\kappa]^n\rightarrow\mu$, define $\kappa\nrightarrow_p[\kappa]^n_\theta$ to assert that 
there exists a coloring $c:[\kappa]^n\rightarrow\theta$ such that for every $A\s\kappa$ of full cardinality 
and every $\tau:\mu\rightarrow\theta$, there exists $x\in[A]^n$ such that $c(x)=\tau(p(x))$.
\begin{question}
Is it  possible to extend Todorcevic's \cite{MR1297180} celebrated 
negative Ramsey relation
$\aleph_2 \nrightarrow [\aleph_1]^3_{\aleph_0}$ to the new context? 
Dually, is it consistent that for some small partition $p$ of $[\aleph_2]^3$,  the positive Ramsey relation $\aleph_2 \rightarrow_p[\aleph_1]^3_{\aleph_0}$ holds?
\end{question}

\begin{question}
	For which integers $n$ and $m$ and cardinals $\kappa,\theta$, does
	$\kappa \rightarrow_p [\kappa]_{\theta}^n$ for some partition $p:[\kappa]^n\rightarrow\mu$
	imply that $\kappa \rightarrow_{p^*} [\kappa]_{\theta}^m$
	for some partition $p^*:[\kappa]^m\rightarrow\mu$?
\end{question}

For a successor cardinal $\kappa=\lambda^+$, we can show that the axiom $\stick(\lambda^+)$ entails a positive implication between all $n,m\ge1$.

\end{document}